\numberwithin{equation}{section}
\newtheorem{conj}[equation]{Conjecture}
\newtheorem{thm}[equation]{Theorem}
\newtheorem{cor}[equation]{Corollary}
\theoremstyle{definition}
\newtheorem{defn}[equation]{Definition}
\newtheorem*{defn*}{Definition}
\newtheorem{eg}[equation]{Example}
\newtheorem{egs}[equation]{Examples}
\newtheorem{rem}[equation]{Remark}
\newtheorem*{rem*}{Remark}
\newtheorem*{notation*}{Notation}
\newtheorem*{Dani}{\bf \MakeUppercase{Dani's contributions}}
\newcommand{\pref}[1]{{\rm(\ref{#1})}}
\newcommand{\bundle}{\mathcal{E}}
\newcommand{\real}{\mathbb{R}}
\newcommand{\integer}{\mathbb{Z}}
\newcommand{\rational}{\mathbb{Q}}
\newcommand{\complex}{\mathbb{C}}
\newcommand{\closure}[1]{\overline{#1}}
\newcommand{\Un}{\mathbb{U}_n}
\renewcommand{\vector}[1]{\accentset{\boldsymbol{\rightharpoonup}}{#1}}
\newcommand{\Rrank}{\mathop{\mathrm{rank}_{\real}}}
\newcommand{\Qrank}{\mathop{\mathrm{rank}_{\rational}}}
\DeclareMathOperator{\SL}{SL}
\DeclareMathOperator{\SO}{SO}
\DeclareMathOperator{\meas}{Meas}
\DeclareMathOperator{\Stab}{Stab}
\DeclareMathOperator{\Aut}{Aut}
\newcommand\bigset[2]{\left\{\, #1 
 \mathrel{\left| \vphantom {\left\{ #1 \mid #2 \right\} }
 \right.} #2 \,\right\} }
\newcommand{\noprelistbreak}{\smallskip\@nobreaktrue\nopagebreak} 
\begin{document}

\title[Dani's Work on Homogeneous Dynamics]{Dani's Work on \\ Dynamical Systems on Homogeneous Spaces}

\author{Dave Witte Morris}

\address{Department of Mathematics and Computer Science \\ University of Lethbridge \\Lethbridge, Alberta, T1K~3M4 \\ Canada}

\email{Dave.Morris@uleth.ca}
\urladdr{http://people.uleth.ca/~dave.morris/}

\dedicatory{to Professor S.\,G.\,Dani on his 65th birthday}

\subjclass[2010]{Primary 
	37A17; 
	Secondary 
	11H55, 
	37A45
	} 

\date{\today} 

\begin{abstract}
We describe some of S.\,G.\,Dani's many contributions to the theory and applications of dynamical systems on homogeneous spaces, with emphasis on unipotent flows.
\end{abstract}

\maketitle

S.\,G.\,Dani has written over 100 papers. They explore a variety of topics, including:
\begin{multicols}{2} \raggedright
\noprelistbreak
	\begin{itemize} \itemsep=\smallskipamount
	\item flows on homogeneous spaces 
		\smallskip
		\noprelistbreak
		\begin{itemize} \itemsep=\smallskipamount 
		\item unipotent dynamics 
		\item applications to Number Theory 
		\item divergent orbits
		\item bounded orbits and Schmidt's Game 
		\item topological orbit equivalence
		\item Anosov diffeomorphisms
		\item entropy and other invariants
		\end{itemize}
	\columnbreak
	\item actions of locally compact groups
		\noprelistbreak
		\begin{itemize} \itemsep=\smallskipamount
		\item actions of lattices 
		\item action of $\Aut G$ on the Lie group~$G$
		\item stabilizers of points
		\end{itemize}
	\item convolution semigroups of probability measures on a Lie group
	\item finitely additive probability measures
	\item Borel Density Theorem
	\item history of Indian mathematics
	\end{itemize}
\end{multicols}
\noindent
Most of Dani's papers (about 60) are directly related to flows on homogeneous spaces. This survey will briefly discuss several of his important contributions in this field.

\begin{notation*}
Let:
\noprelistbreak
	\begin{itemize}
	\item $G = \SL(n,\real)$ (or, for the experts, $G$ may be any connected Lie group),
	\item $\{g^t\}$ be a one-parameter subgroup of~$G$,
	\item $\Gamma = \SL(n,\integer)$ (or, for the experts, $G$ may be any \emph{lattice} in~$G$),
	and
	\item $\varphi_t(x \Gamma) = g^t x \Gamma$ for $t \in \real$ and $x \Gamma \in G/\Gamma$.
	\end{itemize}
Then $\varphi_{s + t} = \varphi_{s} \circ \varphi_{t}$, so $\varphi_t$ is a flow on the homogeneous space $G/\Gamma$.
\end{notation*}

Much is known about flows generated by a general one-parameter subgroup $\{g^t\}$. (For example, the spectrum of $\{g^t\}$, as an operator on $L^2(G/\Gamma)$, is described in \cite[Thm.~2.1]{Dani-spectrum}. A few of Dani's other results on the general case are mentioned in \cref{OpenProbSect} below.) However, the most impressive results apply only to the flows generated by one-parameter subgroups that are ``unipotent:''

\begin{defn*}
A one-parameter subgroup~$\{u^t\}$ of $\SL(n,\real)$ is said to be \emph{unipotent} if $1$~is the only eigenvalue of~$u^t$ (for every $t \in \real$). This is equivalent to requiring that 
	$$ \text{$\{u^t\}$ is conjugate to a subgroup of
	$\Un =$
	\smaller$\begin{bmatrix}
	1  \\[-5pt]
	 & 1   \\[-5pt]
	&  & \ddots & \vbox to 0pt{\vss \hbox to 0pt{\hss\larger[5]$*$}\vskip15pt} \\[-5pt]
	&   & & 1 \\[-5pt]
	&  \vbox to 0pt{\vss \hbox to 0pt{\hskip0pt\larger[3]$0$\hss}\vskip0pt}  &  & & 1
	\end{bmatrix}$} .$$ 
\end{defn*}

Work of Dani, Margulis, Ratner, and others in the 1970's and 1980's showed that if $\{u^t\}$ is unipotent, then the corresponding flow on $G/\Gamma$ is surprisingly well behaved. 
In particular, the closure of every orbit is a very nice $C^\infty$ submanifold of $G/\Gamma$, and every invariant probability measure is quite obvious.

\Cref{UnipSect} describes these (and other) fundamental results on unipotent flows, and \Cref{ApplSect} explains some of their important applications in Number Theory. Dani's contributions are of lasting importance in both of these areas.

\section{Unipotent flows} \label{UnipSect}

\subsection{Orbit closure \pref{RatnerThm-orbit}, equidistribution \pref{RatnerThm-equi}, and measure classification  \pref{RatnerThm-measure}}
If $\{g^t\}$  is a (nontrivial) group of diagonal matrices, then there exists a $\{g^t\}$-orbit in $G/\Gamma$ that is very badly behaved --- its closure is a fractal (cf.\ \cite[Lem.~2]{Starkov-StructOrbs}).  Part~\pref{RatnerThm-orbit} of the following fundamental result tells us that unipotent flows have no such pathology. The other two parts describe additional ways in which the dynamical system is very nicely behaved.

\begin{thm}[Ratner \cite{Ratner-meas,Ratner-dist}] \label{RatnerThm}
If\/ $\{u^t\}$ is unipotent, then the following hold:
\noprelistbreak
	\begin{enumerate} \itemsep=\smallskipamount
	
	\renewcommand{\theenumi}{\textbf{O}}
	\item \label{RatnerThm-orbit}
	 The closure of every $u^t$-orbit on~$G/\Gamma$ is a\/ \textup{(}finite-volume, homogeneous\textup{)} $C^\infty$ submanifold.
			
	\renewcommand{\theenumi}{\textbf E}
	\item \label{RatnerThm-equi}
 	Every $u^t$-orbit on $G/\Gamma$ is uniformly distributed in its closure.

	\renewcommand{\theenumi}{\textbf  M}
	\item \label{RatnerThm-measure}
	Every ergodic $u^t$-invariant probability measure on~$G/\Gamma$ is the natural Lebesgue measure on some \textup{(}finite-volume, homogeneous\textup{)} $C^\infty$ submanifold.

	\end{enumerate}
\end{thm}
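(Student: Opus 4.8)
The plan is to establish the three assertions in the order \pref{RatnerThm-measure}, then \pref{RatnerThm-equi}, then \pref{RatnerThm-orbit}: the measure classification is the analytic heart of the matter, and the other two statements are (nontrivial) deductions from it, combined with older theorems of Dani on the non-divergence of unipotent orbits. For \pref{RatnerThm-measure} I would first use ergodic decomposition to reduce to an ergodic $u^t$-invariant probability measure~$\mu$, and then set $S = \Stab(\mu) = \{\,g \in G : g_* \mu = \mu\,\}$, a closed subgroup containing~$\{u^t\}$. The goal is to show that $\mu$ is the homogeneous measure on a single (necessarily closed, finite-volume) orbit of~$S$, and I would argue by induction on~$\dim G$. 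The crucial mechanism is the \emph{polynomial divergence} (``shearing'') of unipotent flows: if $x$ and~$y$ are two nearby points that are generic for~$\mu$ (these exist in abundance by the pointwise ergodic theorem) but do not lie on a common $S$-orbit, then writing $u^t y = g_t\, u^t x$ for small~$t$, the transverse displacement $g_t$ moves polynomially in~$t$, and its fastest-growing component is synchronized with the flow, i.e.\ it drifts into the centralizer --- more precisely the normalizer --- of~$\{u^t\}$. Passing to a limit of suitable time-rescalings $g_{t_n}$ produces a nontrivial element~$v$ that normalizes~$\{u^t\}$ and preserves~$\mu$, so that $v \in S$ and $S$ is strictly larger than one might have feared; feeding this back in, together with entropy bookkeeping --- comparing the entropy of the time-one map relative to the various unipotent subgroups inside~$S$, in the style of Margulis--Tomanov --- forces $\mu$ to be invariant under a subgroup large enough that the induction closes.

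Granting \pref{RatnerThm-measure}, I would prove \pref{RatnerThm-equi} by a soft compactness argument upgraded by linearization. Fix $x\Gamma$ and consider the empirical measures $\mu_{T} = \frac1T \int_0^T \delta_{\varphi_t(x\Gamma)} \, dt$ on $G/\Gamma$. By Dani's non-divergence theorem for unipotent orbits, this family is tight, so every weak-$*$ limit point is a $u^t$-invariant \emph{probability} measure, hence by \pref{RatnerThm-measure} an integral of homogeneous measures supported on closed orbits of subgroups $S_i \supseteq \{u^t\}$. To see that there is only one limit point --- namely the homogeneous measure through~$x\Gamma$ --- one must rule out the orbit spending too much time near the ``tubes'' $\{\,g\Gamma : $ the orbit of $g\Gamma$ is trapped in a proper subvariety$\,\}$; this is exactly the Dani--Margulis linearization technique, which exploits that the relevant matrix coefficients are polynomial, hence $(C,\alpha)$-good, so that a unipotent trajectory not actually contained in such a subgroup cannot concentrate near the corresponding tube. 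Uniqueness of the limit then yields the asserted uniform distribution.

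Finally, \pref{RatnerThm-orbit} follows quickly: if $Y = \overline{\{\varphi_t(x\Gamma) : t \in \real\}}$, then \pref{RatnerThm-equi} exhibits a $u^t$-invariant probability measure whose support is exactly~$Y$; by \pref{RatnerThm-measure} this measure is the Lebesgue measure on a closed, finite-volume orbit $S x\Gamma$, whence $Y = S x\Gamma$ is a homogeneous $C^\infty$ submanifold. The main obstacle, by a wide margin, is \pref{RatnerThm-measure} itself --- and within it, making the shearing estimate and the entropy comparison cooperate, and then pushing the whole argument through for an arbitrary lattice in an arbitrary connected Lie group (rather than $\SL(n,\integer)$ in $\SL(n,\real)$), which requires a reduction via the structure theory to the case of $\real$-algebraic semisimple groups. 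The ancillary inputs --- Dani's non-divergence estimates and the Dani--Margulis linearization --- are themselves substantial theorems, not formalities.
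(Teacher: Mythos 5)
Your deduction of \pref{RatnerThm-equi} and \pref{RatnerThm-orbit} from \pref{RatnerThm-measure} --- tightness of the empirical measures via Dani's nondivergence theorem (\cref{DaniMargNondiv}), weak-$*$ limits classified by \pref{RatnerThm-measure}, and the Dani--Margulis linearization with its $(C,\alpha)$-good polynomial estimates to rule out concentration near the ``tubes'' and force uniqueness of the limit --- is essentially the argument the paper itself sketches in \cref{LinearizationSect} (there phrased as deriving \pref{RatnerThm-equi} from \pref{RatnerThm-orbit} and \pref{RatnerThm-measure}, with the remark that the same method yields both \pref{RatnerThm-orbit} and \pref{RatnerThm-equi} from \pref{RatnerThm-measure} alone, which is exactly your route). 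On that side you are aligned with the paper, including the role of \cref{DaniMargNondiv} in handling noncompactness.

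The issue is \pref{RatnerThm-measure} itself. The paper does not prove it: it is Ratner's measure-classification theorem, cited from \cite{Ratner-meas}, and the survey only records Dani's precursors ($\SL(2,\real)$, horospherical subgroups, the conjecture itself). Your first paragraph is a road map of the known strategy --- polynomial shearing of nearby $\mu$-generic points producing drift in the normalizer of $\{u^t\}$, extra invariance of $\mu$, entropy bookkeeping in the style of Margulis--Tomanov, induction on $\dim G$ --- but the two decisive steps are asserted rather than carried out: (i) why a limit of suitably renormalized transverse displacements of generic points actually yields a nontrivial new element of $\Stab(\mu)$ (this needs the precise polynomial-divergence estimates, genericity along sets of times of positive density, and control of exactly which directions in the normalizer the drift can occupy); and (ii) why the resulting extra invariance, combined with the entropy comparison, closes the induction for an arbitrary lattice in an arbitrary connected Lie group (this requires the reduction to the algebraic/semisimple setting and the full Ratner or Margulis--Tomanov machinery). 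As written these are statements of intent, and they are precisely where the difficulty of the theorem lives; so your proposal is a correct outline of how \pref{RatnerThm-equi} and \pref{RatnerThm-orbit} follow from \pref{RatnerThm-measure}, but not a proof of \pref{RatnerThm-measure}, which neither you nor the paper establishes --- the paper, being a survey, simply attributes it to Ratner.
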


\begin{rem}
Here is a more precise statement of each part of \cref{RatnerThm}.

\pref{RatnerThm-orbit}
For each $a \in G/\Gamma$, there is a closed subgroup~$L = L_a$ of~$G$, such that $\closure{\{u^t\} a} = L a$. Furthermore, the closed submanifold $La$ of $G/\Gamma$ has finite volume, with respect to an $L$-invariant volume form on~$La$.

\pref{RatnerThm-equi}
Given $a \in G/\Gamma$, let $dg$~be the $L_a$-invariant volume form on~$\closure{\{u^t\} a}$ that is provided by~\pref{RatnerThm-orbit}. (After multiplying $dg$ by a scalar, we may assume the volume of $\closure{\{u^t\} a}$ is~$1$.) Then, for any continuous function~$f$ on $G/\Gamma$, with compact support, the average value of~$f$ along the orbit of~$a$ is equal to the average value of~$f$ on the closure of the orbit. That is, we have
	$$ \lim_{T \to \infty} \frac{1}{T} \int_0^T f (u^t a) \, dt = \int_{\closure{\{u^t\} a}} f \, dg .$$

\pref{RatnerThm-measure}
Suppose $\mu$ is a Borel measure on $G/\Gamma$, such that 
\noprelistbreak
	\begin{itemize}
	\item $\mu( G/ \Gamma) = 1$, 
	\item $\mu(u^t A) = \mu(A)$ for every measurable subset~$A$ of $G/\Gamma$,
	and
	\item for every $u^t$-invariant, measurable subset~$A$ of $G/\Gamma$, either $\mu(A) = 0$ or $\mu(A) = 1$.
	\end{itemize}
(In other words, $\mu$ is an ergodic $u^t$-invariant probability measure on $G/\Gamma$.)
Then there exists a closed subgroup $L = L_\mu$ of~$G$, and some $a = a_\mu \in G$, such that the orbit $L a$ is closed, and $\mu$~is an $L$-invariant Lebesgue measure on the submanifold~$L a$. (The $L$-invariant measure on $La$ is unique up to multiplication by a positive scalar, so the probability measure~$\mu$ is uniquely determined by~$L$.)
\end{rem}

\begin{Dani}
Dani was a central figure in the activity that paved the way for Ratner's proof of \cref{RatnerThm}.  In particular:
\noprelistbreak
	\begin{enumerate} \itemsep=\medskipamount
	
	\item Dani's early work on actions of unipotent subgroups was one of the ingredients that inspired Raghunathan to conjecture the truth of~\pref{RatnerThm-orbit}. Raghunathan did not publish the conjecture himself --- its first appearance in the literature was in a paper of Dani \cite[p.~358]{Dani-MinSetHoro}.
	
 	\item \pref{RatnerThm-measure} was conjectured by Dani \cite[p.~358]{Dani-MinSetHoro}. This was an important insight, because the methods developed in the 1980's were able to prove this part of \cref{RatnerThm} first, and the other two parts are corollaries of it\label{E-->O+M} (cf.\ \cref{LinearizationSect} below).

	\item For $G = \SL(2,\real)$, \pref{RatnerThm-measure} and \pref{RatnerThm-equi} were proved by Furstenberg \cite{Furstenberg-UniqErg} when $G/\Gamma$ is compact, but it was Dani who tackled the noncompact case. First, he \cite{Dani-InvtMeasNoncpct} proved \pref{RatnerThm-measure}. Then he \cite{Dani-UnifDist} proved the special case of \pref{RatnerThm-equi} in which $\Gamma = \SL(2,\integer)$. Subsequent joint work with Smillie \cite{DaniSmillie-UnifDist} established \pref{RatnerThm-equi} for the other (noncocompact) lattices in $\SL(2,\real)$.

	\item Dani \cite{Dani-MinSetHoro,Dani-OrbHoroFlows} proved analogues of \pref{RatnerThm-orbit} and \pref{RatnerThm-measure} in which $u^t$ replaced with the larger unipotent subgroup $\Un$\label{LargerAnalogue}. (See \cref{HoroSect} below for more discussion of this.)
	
	\item Dani and Margulis \cite{DaniMargulis-OrbClos} proved \pref{RatnerThm-orbit} under the assumption that $G = \SL(3,\real)$ and $u^t$ is generic.

	\end{enumerate}
\end{Dani}

Furthermore, Dani \cite{Dani-InvtMeasMargulis,Dani-OrbUnipFlow} established a very important special case of \pref{RatnerThm-equi}, long before the full theorem was proved. Note that if $a \in G/\Gamma$ and $\epsilon > 0$, then, since \pref{RatnerThm-orbit} tells us that $\closure{\{u^t\}a}$ has finite volume, there exists $C > 0$, such that the complement of the ball of radius~$C$ has volume less than~$\epsilon$. Therefore, letting 
\noprelistbreak
	\begin{itemize}
	\item $\lambda$ be the usual Lebesgue measure on~$\real$, 
	and
	\item $d(x,y)$ be the distance from $x$ to~$y$ in $G/\Gamma$,
	\end{itemize}
\pref{RatnerThm-equi} implies that 
	\begin{align} \label{DaniMargIneq} 
	 \limsup_{T \to \infty} \frac{\lambda \{\, t\in [0,T] \mid d(u^t a , a) > C \,\}}{T} < \epsilon 
	 . \end{align}
Dani proved this fundamental inequality: 

\begin{thm}[Dani \cite{Dani-InvtMeasMargulis,Dani-OrbUnipFlow}] \label{DaniMargNondiv}
For every $a \in G/\Gamma$ and $\epsilon > 0$, there exists $C > 0$, such that \pref{DaniMargIneq} holds.
\end{thm}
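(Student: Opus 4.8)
The plan is to recast the statement in the space of lattices and exploit the fact that unipotent motion is \emph{polynomial}. (One could instead quote part~\pref{RatnerThm-equi} of \cref{RatnerThm}, as in the paragraph preceding the statement, but that reverses the true logical order, since \cref{DaniMargNondiv} is one of the inputs to \cref{RatnerThm}; so a direct argument is wanted.) Work with $G=\SL(n,\real)$ and $\Gamma=\SL(n,\integer)$, identifying $G/\Gamma$ with the space of unimodular lattices in $\real^n$, a point $x$ corresponding to a lattice $\Lambda_x$; for a lattice $\Lambda$ write $\operatorname{sys}(\Lambda)$ for the length of a shortest nonzero vector. By Mahler's compactness criterion the sets $K_\delta=\{\,x: \operatorname{sys}(\Lambda_x)\ge\delta\,\}$ are compact, with union (over $\delta>0$) all of $G/\Gamma$. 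It therefore suffices to prove the \textbf{quantitative nondivergence estimate}: for every $\epsilon>0$ there is $\delta>0$ such that, for every unimodular lattice $\Lambda$ and every $T>0$,
$$ \lambda\{\,t\in[0,T]: \operatorname{sys}(u^t\Lambda)<\delta\,\} < \epsilon\,T .$$
Indeed, given $a$ and $\epsilon$, take $\delta=\delta(\epsilon/2)$ from this estimate and then $C$ so large that $K_\delta\subseteq\{\,x: d(x,a)\le C\,\}$ (possible since $K_\delta$ is compact and the balls around $a$ exhaust $G/\Gamma$); then $\{\,t: d(u^t a,a)>C\,\}\subseteq\{\,t:\operatorname{sys}(u^t\Lambda_a)<\delta\,\}$, whence the upper density of that set is $<\epsilon$, which is \pref{DaniMargIneq}. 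For a general connected Lie group $G$ and lattice $\Gamma$ one replaces $\operatorname{sys}$ by an analogous proper function on $G/\Gamma$ supplied by reduction theory; the mechanism below is unchanged, so I describe the $\SL(n,\real)$ case.

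The key use of unipotence is this: writing $u^t=\exp(tN)$ with $N$ nilpotent, every entry of $u^t$ is a polynomial in $t$ of degree less than~$n$; hence for any $v\in\real^n$ the function $t\mapsto\|u^t v\|^2$ is a polynomial of degree $\le 2(n-1)$, and, more generally, for any rank-$k$ sublattice $\Delta=\integer w_1+\cdots+\integer w_k$ of a lattice $\Lambda$, the covolume $\operatorname{covol}(u^t\Delta)=\|u^t w_1\wedge\cdots\wedge u^t w_k\|$ is the square root of a polynomial in $t$ of degree bounded in terms of $n$ alone. This is exactly where a diagonal flow would fail: there the matrix entries grow exponentially, and a short vector can stay short forever. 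The one-variable input is the Remez-type inequality (``polynomials are $(C,\alpha)$-good''): a nonzero real polynomial $p$ of degree $\le d$ satisfies $\lambda\{\,t\in I:|p(t)|\le\rho\sup_I|p|\,\}\le 2d\,\rho^{1/d}\,\lambda(I)$ for $0<\rho\le 1$ and every interval $I$.

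The remaining step --- and the genuinely hard one --- is to combine these polynomial estimates \emph{over all sublattices simultaneously}. One cannot simply sum the estimate of the previous paragraph over the infinitely many primitive vectors $v\in\Lambda$, since the resulting bounds do not converge. The correct device is an induction on rank together with the \textbf{chain lemma}: by the submodular inequality $\operatorname{covol}(\Delta_1\cap\Delta_2)\cdot\operatorname{covol}(\overline{\Delta_1+\Delta_2})\le\operatorname{covol}(\Delta_1)\cdot\operatorname{covol}(\Delta_2)$ for primitive sublattices ($\overline{\Delta_1+\Delta_2}$ denoting the primitive hull), if two distinct nonzero primitive sublattices both have covolume below a suitable threshold at a given time, then they are nested; hence the ``small'' sublattices at any fixed $t$ form a flag. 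One then subdivides $[0,T]$ into subintervals on each of which a single sublattice governs the excursion, bounds on each subinterval --- via the Remez inequality applied at the right reference scale --- the proportion of time its covolume drops below $\delta$, and feeds the count of lower-rank small sublattices into the inductive hypothesis. The main obstacle is the multi-scale bookkeeping: one must choose the thresholds at the successive ranks so that the constants telescope rather than compound, and handle the nested families of subintervals uniformly in~$T$. This combinatorial core is what Dani worked out for $\SL(2,\real)$ and $\SL(3,\real)$, and what, in sharp quantitative form, was systematized by Dani and Margulis (and later by Kleinbock and Margulis).
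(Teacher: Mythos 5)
Your overall strategy --- Mahler compactness, polynomial behaviour of sublattice covolumes along the unipotent orbit, a Remez-type ($(C,\alpha)$-good) estimate, and the chain/flag argument with induction on rank --- is exactly the Margulis-induction machinery that this survey points to (the paper itself gives no proof of \cref{DaniMargNondiv}; it only cites \cite{Dani-InvtMeasMargulis,Dani-OrbUnipFlow} and the exposition in the appendix of \cite{DaniMargulis-ElemApproach}). However, your pivotal intermediate claim is false as stated: there is no $\delta=\delta(\epsilon)$ that works for \emph{every} unimodular lattice and every $T$. Since $\{u^t\}$ is unipotent it has a nonzero fixed vector; if $\Lambda$ is a unimodular lattice containing such a fixed vector $v$ with $\|v\|<\delta$, then $\operatorname{sys}(u^t\Lambda)<\delta$ for \emph{all} $t$, so your set has measure $T$ rather than $<\epsilon T$. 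The correct quantitative nondivergence statement (Dani, Dani--Margulis, later sharpened by Kleinbock--Margulis) carries a hypothesis: if every proper nonzero primitive sublattice $\Delta$ satisfies $\sup_{t\in[0,T]}\operatorname{covol}(u^t\Delta)\ge\rho$, then the time with $\operatorname{sys}(u^t\Lambda)<\delta$ has measure at most (a constant times) $(\delta/\rho)^{\alpha}T$. Your deduction is repairable precisely because the basepoint is fixed: for $\Lambda_a$ take $\rho$ to be the minimum over $k$ of the length of the shortest nonzero decomposable vector in $\wedge^k\Lambda_a$, which is positive by discreteness; since the supremum over $[0,T]$ is at least the value at $t=0$, the hypothesis holds for every $T$, and one then chooses $\delta$ depending on $\epsilon$ \emph{and} $\rho$, i.e.\ on $a$. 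This dependence on $a$ is not a technicality --- it is exactly why the constant $C$ in \cref{DaniMargNondiv} depends on the point $a$, and dropping it turns a true statement into a false one. (Uniformity over $a$ in a compact set is the later Dani--Margulis refinement \cite{DaniMargulis-Asymp,DaniMargulis-LimitDist}, and even there the hypothesis on sublattices cannot be discarded.)

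The second gap is that the combinatorial heart of the argument --- the induction on rank, the submodularity/chain lemma, and the choice of thresholds so that the constants telescope --- is described but not carried out; you flag it yourself as ``the genuinely hard one.'' As written, the proposal is an accurate outline of the known proof rather than a proof: the entire content of the theorem lives in that bookkeeping, which is the ``ingenious induction argument of Margulis \dots appropriately modified by Dani'' that the paper alludes to, with a complete exposition in the appendix of \cite{DaniMargulis-ElemApproach}. A similar remark applies to your one-line dismissal of the general case (arbitrary connected $G$ and lattice $\Gamma$), which in Dani's papers requires a genuine reduction (via arithmeticity and rank considerations), not just replacing $\operatorname{sys}$ by a proper function.
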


This is a strengthening of the following fundamental result:

\begin{cor}[Margulis \cite{Margulis-ActUnipLattSpaceSbornik,Margulis-ActUnipLattSpace}] \label{UnipNotDiverge}
No $u^t$-orbit diverges to $\infty$. 

	More precisely, for every $a \in G/\Gamma$, we have $d(u^t a, a) \not\to \infty$ as $t \to +\infty$.
\end{cor}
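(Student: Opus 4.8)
The plan is to obtain \cref{UnipNotDiverge} as an immediate consequence of \cref{DaniMargNondiv}. That theorem is a quantitative, time-averaged form of non-divergence, and the qualitative statement of the corollary falls out of it by a soft argument, so I would argue by contradiction.

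First I would suppose the conclusion fails: there is a point $a \in G/\Gamma$ whose orbit diverges, i.e.\ $d(u^t a, a) \to \infty$ as $t \to +\infty$. Apply \cref{DaniMargNondiv} to this $a$ with, say, $\epsilon = 1/2$, obtaining a radius $C > 0$ for which \pref{DaniMargIneq} holds. Next, because $d(u^t a, a) \to \infty$, there is a threshold $T_0$ with $d(u^t a, a) > C$ for all $t \ge T_0$; hence for every $T > T_0$ the excursion set $\{\, t \in [0,T] \mid d(u^t a, a) > C \,\}$ contains the interval $[T_0, T]$, so its normalized measure is at least $(T - T_0)/T$. Letting $T \to \infty$ shows the $\limsup$ in \pref{DaniMargIneq} equals~$1$, contradicting the bound $\limsup < \epsilon = 1/2$. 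This contradiction proves that no orbit diverges.

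The only subtlety — and it is a mild one — is the passage from the averaged statement of \cref{DaniMargNondiv} to the pointwise statement of the corollary; the bridge is the elementary fact that a function tending to $+\infty$ spends asymptotically all of its time above any fixed level. All the genuine difficulty is packed into \cref{DaniMargNondiv} itself. Indeed, were one to prove \cref{UnipNotDiverge} from scratch (this is Margulis's original route), the main obstacle would be exactly what Dani's theorem overcomes: identifying divergence in $G/\Gamma$ with the appearance of arbitrarily short vectors in the corresponding lattice (Mahler's criterion), controlling a single short vector via the fact that $t \mapsto u^t v$ is a polynomial of bounded degree — so it cannot be small on a large proportion of an interval — and then running an induction over the rational subspaces of $\real^n$ to handle all potentially short vectors simultaneously. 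That inductive bookkeeping is the delicate step; but with \cref{DaniMargNondiv} in hand it is unnecessary, and the corollary is one line.
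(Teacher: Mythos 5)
Your argument is exactly the paper's: both deduce the corollary from \cref{DaniMargNondiv} by noting that if $d(u^t a,a)\to\infty$ then the set $\{\,t\in[0,T]\mid d(u^t a,a)>C\,\}$ contains $[T_0,T]$, forcing the $\limsup$ in \pref{DaniMargIneq} to equal $1$ and contradicting the bound by $\epsilon$. The proof is correct and takes essentially the same route as the paper.
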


\begin{proof}
If $d(u^t a, a) \to \infty$, then there exists $T_0 > 0$, such that $d(u^t a , a) > C$ for all $t > T_0$. Therefore
	\begin{align*}
	\limsup_{T \to \infty} \frac{\lambda \{\, t\in [0,T] \mid d(u^t a , a) > C \,\}}{T}
	\ge \lim_{T \to \infty} \frac{T - T_0}{T}
	= 1
	> \epsilon
	.  & \qedhere \end{align*}
\end{proof}

The proof of \cref{DaniMargNondiv} employs an ingenious induction argument of Margulis that was appropriately modified by Dani. The best exposition of this idea is in an appendix of a paper by Dani and Margulis  \cite{DaniMargulis-ElemApproach}.

For applications, it is important to have strengthened versions of \cref{DaniMargNondiv} and \pref{RatnerThm-equi} giving estimates that are \emph{uniform} as the starting point of the orbit varies over a compact set. 
The first such theorems were proved by Dani and Margulis \cite{DaniMargulis-Asymp,DaniMargulis-LimitDist}.

\begin{rem} Margulis \cite[Rem.~3.12(II)]{Margulis-LieGrpsErgThy} observed that Dani's \cref{DaniMargNondiv} provides a short (and rather elementary) proof of the fundamental fact that arithmetic groups are lattices. That is, if $\mathbf{G}$ is an algebraic subgroup of $\mathbf{SL}_n$ that is defined over~$\rational$, and $\mathbf{G}$ has no characters that are defined over~$\rational$, then the homogeneous space $\mathbf{G}(\real)/\mathbf{G}(\integer)$ has finite volume.
\end{rem}

\subsection{Linearization} \label{LinearizationSect}
Dani and Margulis \cite[\S3]{DaniMargulis-LimitDist} developed an important method that is called ``\emph{Linearization},'' because it replaces the action of $u^t$ on $G/\Gamma$ with the much simpler action of $u^t$ by linear transformations on a vector space. It has become a crucial tool in applications of unipotent flows in Number Theory and related areas. 
To see the main idea (which has its roots in earlier work of Dani-Smillie \cite{DaniSmillie-UnifDist} and Shah \cite{Shah-UnifDistOrbs}), consider the following proof that \pref{RatnerThm-equi} is a consequence of \pref{RatnerThm-orbit} and~\pref{RatnerThm-measure}:

\begin{proof}[Idea of proof of \pref{RatnerThm-equi} from \pref{RatnerThm-orbit} and \pref{RatnerThm-measure}]
The start of the proof is straightforward.
Fix $a \in G/\Gamma$. From \pref{RatnerThm-orbit}, we know there is a closed subgroup~$L$ of~$G$, such that $\closure{\{u^t\} a} = L a$. Furthermore, there is an $L$-invariant probability measure~$\mu$ on $La$.

Assume, for simplicity, that $G/\Gamma$ is compact, and let $\meas(G/\Gamma)$ be the set of probability measures on $G/\Gamma$.
(The Riesz Representation Theorem tells us that $\meas(G/\Gamma)$ can be identified with the set of positive linear functionals on $C(G/\Gamma)$ that have norm~$1$, so it has a natural weak$^*$ topology.)
For each $T > 0$, define $M_T \in \meas(G/\Gamma)$ by
	$$ M_T(f) =  \frac{1}{T} \int_0^T f (u^t a) \, dt .$$
We wish to show $M_T \to \mu$ as $T \to \infty$.

Since $\meas(G/\Gamma)$ is a closed, convex subset of the unit ball in $C(G/\Gamma)^*$, the Banach-Alaoglu Theorem tells us that $\meas(G/\Gamma)$ is compact (in the weak$^*$ topology). Therefore, in order to show $M_T \to \mu$, it suffices to show that $\mu$~is the only accumulation point of $\{M_T\}$.

Thus, given an accumulation point $\mu_\infty$ of $\{M_T\}$, we wish to show $\mu_\infty = \mu$. It is not difficult to see that $\mu_\infty$ is $u^t$-invariant (since $M_T$ is nearly invariant when $T$ is large). Assume, for simplicity, that $\mu_\infty$ is ergodic. Then \pref{RatnerThm-measure} tells us there is a closed subgroup $L_\infty$ of~$G$, and some $a_\infty \in G$, such that the orbit $L_\infty a_\infty$ is closed, $\mu_\infty$~is $L_\infty$-invariant, and $\mu_\infty$~is supported on~$L_\infty a_\infty$. 

Since $M_T \to \mu_\infty$, we know $L_\infty a_\infty \subseteq \closure{\{u^t\} a} = L a$. To complete the proof, it suffices to show the opposite inclusion $L a \subseteq L_\infty a_\infty$. (This suffices because the two inclusions imply $L = L_\infty$, which means that $\mu_\infty$ must be the (unique) $L$-invariant probability measure on $La$, which is~$\mu$.)
 
We will now use \emph{Linearization} to show $u^t a \in L_\infty a_\infty$ for all~$t$. (Since $\closure{\{u^t\} a} = L a$, and $L_\infty a_\infty$ is closed, this implies the desired inclusion $L a \subseteq L_\infty a_\infty$.)
Roughly speaking, one can show that the subgroup~$L_\infty$ is Zariski closed (since the ergodicity of~$\mu_\infty$ implies that the unipotent elements generate a cocompact subgroup). Therefore, Chevalley's Theorem (from the theory of Algebraic Groups) tells us there exist:
\noprelistbreak
	\begin{itemize}
	\item a finite-dimensional real vector space~$V$, 
	\item a homomorphism $\rho \colon G \to \SL(V)$, 
	and
	\item $\vector v \in V$,
	\end{itemize}
such that $L_\infty = \Stab_G(\vector v)$. 
The Euclidean distance formula tells us that $d(\vector x, \vector v)^2$ is a polynomial of degree~$2$ (as a function of~$\vector x$). Also, since $\rho(u^t)$ is unipotent, elementary Lie theory shows that each matrix entry of $\rho(u^t)$ is a polynomial function of~$t$. Therefore, if we choose $g \in G$, such that $a = g \Gamma$, then
	$$d_g(t) := d \bigl( \rho(u^t g) \vector v, \vector v \bigr) $$
is a polynomial function of~$t$. Furthermore, the degree of this polynomial is bounded, independent of~$g$.

Since $M_T \to \mu_\infty$, and $\mu_\infty$ is supported on~$L_\infty a_\infty$, we know that if $T$~is large, then for most $t \in [0,T]$, the point $u^t a$ is close to $L_\infty a_\infty$. 
Assuming, for simplicity, that $a_\infty = e$, so $\rho(L_\infty a_\infty) \, \vector v = \rho(L_\infty) \, \vector v = \vector v$, 
this implies there exists $\gamma \in \Gamma$, such that $d_{g\gamma}(t)$ is very small. However, a polynomial of bounded degree that is very small on a large fraction of an interval must be small on the entire interval. We conclude that $d_{g\gamma}(t)$ is small for all $t \in \real^+$ (and that $\gamma$ is independent of~$t$). Since constants are the only bounded polynomials, this implies that the distance from $u^t a$ to $L_\infty a_\infty$ is a constant (independent of~$t$). Then, from the first sentence of this paragraph, we see that this distance must be~$0$, which means $u^t a \in L_\infty a_\infty$ for all~$t$, as desired.
\end{proof}

\begin{rem}
The above argument can be modified to derive both \pref{RatnerThm-orbit} and \pref{RatnerThm-equi} from~\pref{RatnerThm-measure}, without needing to assume~\pref{RatnerThm-orbit}. Therefore, as was mentioned on page~\pageref{E-->O+M}, \pref{RatnerThm-measure} implies both \pref{RatnerThm-orbit} and \pref{RatnerThm-equi}.
\end{rem}

\begin{rem}
The above proof assumes that $G/\Gamma$ is compact. To eliminate this hypothesis, one passes to the one-point compactification, replacing $\meas(G/\Gamma)$ with $\meas \bigl( G/\Gamma \cup \{\infty\} \bigr)$. The bulk of the argument can remain unchanged, because Dani's \cref{DaniMargNondiv} tells us that if $\mu_\infty$ is any accumulation point of $\{M_T\}$, then $\mu_\infty \bigl( \{\infty\} \bigr) = 0$, so $\mu_\infty \in \meas(G/\Gamma)$.
 \end{rem}

\subsection{Actions of horospherical subgroups and actions of lattices} \label{HoroSect}

As has already been mentioned on page~\pageref{LargerAnalogue}, Dani proved the analogues of \pref{RatnerThm-measure} and \pref{RatnerThm-orbit} in which the one-dimensional unipotent subgroup $\{u^t\}$ is replaced by a unipotent subgroup that is maximal:

\begin{thm}[Dani \cite{Dani-MinSetHoro,Dani-OrbHoroFlows}] \label{DaniMax}
Let
\noprelistbreak
	\begin{itemize}
	\item $G = \SL(n,\real)$ \textup(or, more generally, let $G$ be a connected, reductive, linear Lie group\textup),
	\item $U = \Un$ \textup(or, more generally, let $U$ be any maximal unipotent subgroup of~$G$\textup),
	and
	\item $\Gamma$ be a discrete subgroup of~$G$, such that $G/\Gamma$ has finite volume \textup(in other words, let $\Gamma$ be a lattice in~$G$\textup). 
	\end{itemize}
Then:
\noprelistbreak
	\begin{enumerate}

	\renewcommand{\theenumi}{$\mathbf{O}'$}
	\item \label{DaniMax-orbit}
	The closure of every $U$-orbit on~$G/\Gamma$ is a\/ \textup{(}finite-volume, homogeneous\textup{)} $C^\infty$ submanifold.

	\renewcommand{\theenumi}{$\mathbf{M}'$}
	\item \label{DaniMax-measure}
Every ergodic $U$-invariant probability measure on~$G/\Gamma$ is the natural Lebesgue measure on some \textup{(}finite-volume, homogeneous\textup{)} submanifold.
	\end{enumerate}
\end{thm}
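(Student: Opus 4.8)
The plan is to establish the measure classification \pref{DaniMax-measure} first and then deduce the orbit-closure statement \pref{DaniMax-orbit}, in the same spirit in which \pref{RatnerThm-measure} yields \pref{RatnerThm-orbit} (see \cref{LinearizationSect}). The structural starting point is that a maximal unipotent subgroup $U$ of a reductive group $G$ is \emph{horospherical}: there is a one-parameter subgroup $\{a^s\}$ of $\real$-diagonalizable elements with
$$ U = \{\, g \in G \mid a^{-s} g a^s \to e \text{ as } s \to +\infty \,\} , $$
and, if $A$ is a maximal $\real$-split torus through $\{a^s\}$, then $U$ is the unipotent radical of the minimal parabolic subgroup $P = MAU$, with $M$ compact modulo the center. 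Thus $A$ normalizes $U$; conjugation by $a^s$ contracts the opposite unipotent subgroup $U^-$ as $s \to +\infty$ while fixing $A$ and keeping $M$ bounded; and, near the identity, $G$ is the product of the transverse pieces $U^-$, $Z_G(A) = MA$, and $U$. Throughout, the indispensable analytic ingredient is Dani's non-divergence estimate (\cref{DaniMargNondiv}) and its version uniform over compact sets of basepoints: these guarantee that no mass of a $U$-invariant probability measure, and no $U$-minimal set, escapes to infinity, so that one may always work inside a fixed compact subset of $G/\Gamma$ carrying nearly all the mass.

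For \pref{DaniMax-measure}, let $\mu$ be an ergodic $U$-invariant probability measure and let $H = \{\, g \in G \mid g_*\mu = \mu \,\}$ be its stabilizer, a closed subgroup containing $U$. It suffices to prove that $\mathrm{supp}(\mu)$ is a single \emph{closed} $H$-orbit, for then $\mu$ is the $H$-invariant Lebesgue measure on it, which is the assertion. The heart is a ``creating new invariance'' argument --- the analogue, for maximal unipotent subgroups, of the polynomial-divergence mechanism behind \pref{RatnerThm-measure}, but driven instead by the \emph{exponential} contraction inherent in the horospherical structure. Suppose $\mathrm{supp}(\mu)$ is not already a closed $U$-orbit. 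Then $\mu$-almost every point has $\mu$-generic points arbitrarily near it and off its $U$-orbit; sliding along $U$-orbits produces such a pair $x$, $x' = gx$ with $g$ near $e$, $g = g^- g^0$, $g^- \in U^-$, $g^0 \in Z_G(A)$. Flowing forward by $a^s$ and invoking \cref{DaniMargNondiv} to force the two orbits to return together, infinitely often, to a fixed compact set on which they remain close --- the step made delicate by the fact that $\mu$ need not be $a^s$-invariant, and where non-divergence is decisive --- the relative displacement $a^s g a^{-s}$ is driven, along a suitable sequence $s_k \to +\infty$, to a limit $h \in Z_G(A) \subseteq N_G(U)$ (the $U^-$-part contracts away, the $Z_G(A)$-part subconverges), and $h \in H$. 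Unless $g^0$ was trivial, $h \neq e$, so $H$ has grown; in the residual case where the transverse generic directions point purely into $U^-$, a Hopf-type argument using the same contraction and expansion yields local product structure for $\mu$ and hence invariance under $\langle U, U^- \rangle$. Iterating, and using the structure theory of the closed subgroups between $U$ and $G$ that are generated by unipotent elements, the process terminates (induction on $\dim G$) with $\mathrm{supp}(\mu)$ a single closed $H$-orbit, as required.

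Granting \pref{DaniMax-measure}, the orbit-closure statement \pref{DaniMax-orbit} follows. Fix $x \in G/\Gamma$. By Zorn's lemma the closed $U$-invariant set $\closure{Ux}$ contains a $U$-minimal subset $Y$, which is compact by \cref{DaniMargNondiv} and therefore supports an ergodic $U$-invariant probability measure $\mu$; by \pref{DaniMax-measure}, $\mathrm{supp}(\mu)$ is a closed homogeneous orbit $Ly$, and being a closed $U$-invariant subset of the minimal set $Y$ it must equal $Y$, so every $U$-minimal set is a closed homogeneous orbit. One then bootstraps from $Y$ up to all of $\closure{Ux}$: the elements $g$ with $gY \subseteq \closure{Ux}$, together with $U$, generate a closed subgroup $F$ for which $\closure{Ux} = Fx$ --- the horospherical contraction is again what converts the topological relation ``$gx \in \closure{Ux}$'' into membership in an honest group --- and $F$ is identified, as before, with a subgroup in whose closed orbit the answer lies. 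The genuine obstacle, in both halves, is the new-invariance step of \pref{DaniMax-measure}: arranging through \cref{DaniMargNondiv} that the $a^s$-flow returns the two generic orbits to a common compact set simultaneously and often enough for the transverse displacement to converge to a well-defined nontrivial limit, in the absence of any $a^s$-invariance of $\mu$, together with the Lie-theoretic bookkeeping showing that the subgroups produced by the iteration are exactly the reductive subgroups in whose closed orbits the classification lives. (Since \cref{RatnerThm} is stated above, one could alternatively deduce \cref{DaniMax} from the one-parameter case by restricting to a suitable $\{u^t\} \subseteq U$ and analyzing its ergodic components --- though this reverses the historical order, Dani's theorem having come first.)
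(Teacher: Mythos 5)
First, a point of comparison: the paper does not prove \cref{DaniMax} at all --- it is a survey statement attributed to Dani's original papers --- so your proposal can only be judged on its own terms. On those terms it is a plausible strategy outline, broadly consistent with how horospherical rigidity is actually approached (exploit the diagonalizable $\{a^s\}$ normalizing $U$, use \cref{DaniMargNondiv} to keep everything in compact sets, create extra invariance transverse to $U$, then pass from \pref{DaniMax-measure} to \pref{DaniMax-orbit} via minimal sets), but the steps you leave unargued are precisely the theorem's actual content, so as a proof it has genuine gaps. The central one is the ``new invariance'' step: from a pair of nearby $\mu$-generic points $x$, $gx$ with $g = g^-g^0$, you push forward by $a^{s_k}$ and assert that the limiting displacement $h = g^0 \in Z_G(A)$ preserves $\mu$. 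Since $\mu$ is not $a^s$-invariant, $a^{s_k}$ does not carry generic points to generic points, and nothing in your sketch replaces this; you acknowledge the difficulty (``the step made delicate by\dots'') but do not resolve it. One needs a genuine mechanism here --- e.g.\ uniform convergence of ergodic averages on a large compact set of generic points plus a recurrence statement strong enough to return \emph{both} orbits to that set along a common sequence $s_k$, or an entirely different argument (Dani's published proofs do not run on this conjugation-of-generic-points scheme). Equally unproved is the ``residual case'' $g \in U^-$: invoking ``a Hopf-type argument'' to get local product structure and $\langle U, U^-\rangle$-invariance of an arbitrary $U$-invariant measure is not an argument --- the Hopf method concerns ergodicity of a fixed smooth/Haar measure, and obtaining product structure for an unknown $\mu$ is essentially the statement being proved. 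Finally, the iteration ``$H$ grows, induct on $\dim G$, conclude $\mathrm{supp}(\mu)$ is a closed $H$-orbit'' needs the Lie-theoretic classification of the intermediate subgroups and a reason the closed-orbit property holds at the end; none of this is supplied.

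The deduction of \pref{DaniMax-orbit} has a parallel gap. The first half is fine (nondivergence gives a compact set met by every closed $U$-invariant set, so Zorn produces a compact $U$-minimal set $Y$, which carries an ergodic invariant measure and is therefore a closed homogeneous orbit by \pref{DaniMax-measure}). But the bootstrap from $Y$ to $\closure{Ux}$ --- that $\{\, g \mid gY \subseteq \closure{Ux}\,\}$ together with $U$ generates a closed subgroup $F$ with $\closure{Ux} = Fx$ --- is asserted with only a gesture at ``horospherical contraction.'' That passage (note $x$ need not lie in $Y$) is the substance of Dani's orbit-closure paper and cannot be waved through. Your closing parenthetical is the one complete route visible in your write-up: \cref{RatnerThm} in its measure form applies to subgroups generated by unipotent one-parameter subgroups, hence to $U$, and \pref{DaniMax-orbit} then follows by the linearization scheme of \cref{LinearizationSect}; but that is ahistorical (Dani's theorem predates Ratner's) and is not what your main argument establishes.
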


\begin{rem} \label{HoroRem}
Although the statement of \cref{DaniMax} requires the unipotent subgroup~$U$ to be maximal,
Dani actually proved \pref{DaniMax-orbit} under the weaker assumption that $U$ is ``horospherical:''
	$$ \begin{matrix}
	\text{For each $g \in G$, the corresponding \emph{horospherical subgroup} is} \\
	 U_g = \{\, u \in G \mid \text{$g^{-k} u g^k \to e$ as $k \to +\infty$} \,\} 
	 . \end{matrix}$$
Since $1$~is the only eigenvalue of the identity matrix, and similar matrices have the same eigenvalues, it is easy to see that every element of~$U_g$ is unipotent. Conversely, the maximal unipotent subgroup~$\Un$ is horospherical. (Namely, we have $\Un = U_g$ if $g= \mathrm{diag}(\lambda_1,\lambda_2,\ldots,\lambda_n)$ is any diagonal matrix with $\lambda_1 > \lambda_2 > \cdots > \lambda_n > 0$.)
\end{rem}

Dani's first published paper was joint work with Mrs.~Dani \cite{DaniDani-DenseOrbits}, while they were students at the Tata Institute of Fundamental Research. It proved a $p$-adic version of the following interesting consequence of \pref{DaniMax-orbit}:

\begin{cor}[Greenberg \cite{Greenberg-DenseOrbits}] \label{GreenbergRnDense} 
Let $G = \SL(n,\real)$, and let\/ $\Gamma$ be a discrete subgroup of~$G$, such that $G/\Gamma$ is compact. Then the\/ $\Gamma$-orbit of every nonzero vector is dense in\/~$\real^n$.
\end{cor}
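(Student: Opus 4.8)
The plan is to deduce density of $\Gamma v$ in $\real^n$ from density of a single orbit of $U := \Un$ in $G/\Gamma$, and to get that density from \pref{DaniMax-orbit}.

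First I would translate the problem into homogeneous-space language. The group $G = \SL(n,\real)$ acts transitively on $\real^n \setminus \{0\}$, and the stabilizer of the first basis vector $e_1$ is the subgroup $H = \Stab_G(e_1)$ of matrices whose first column is $e_1$; in particular $U \subseteq H$, since the first column of an upper-triangular unipotent matrix is $e_1$. As the orbit map $g \mapsto g e_1$ is an open submersion, $\real^n \setminus \{0\}$ is homeomorphic to $G/H$. Writing the given vector as $v = g_0 e_1$ with $g_0 \in G$, and using that the projections $G \to G/H$ and $G \to G/\Gamma$ are open and continuous (so that for saturated sets density passes back and forth between $G$ and the quotient), one checks
\begin{equation*}
\text{$\Gamma v$ dense in $\real^n$}
 \iff \text{$\Gamma g_0 H$ dense in $G$}
 \iff \text{$\overline{\,H \cdot (g_0^{-1}\Gamma)\,} = G/\Gamma$} ,
\end{equation*}
the last orbit being for the left-translation action of $H$ on $G/\Gamma$ (for the first step one also uses that $\real^n \setminus \{0\}$ is dense in $\real^n$, and for the second one inverts and passes through $\Gamma \backslash G$). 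Since $U \subseteq H$, it is enough to show that the $U$-orbit of the point $x := g_0^{-1}\Gamma$ is dense in $G/\Gamma$.

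Next I would apply \pref{DaniMax-orbit}: $\overline{U x} = L x$ for some closed subgroup $L$ of $G$ with $U \subseteq L$, and $L x$ is a finite-volume homogeneous submanifold of $G/\Gamma$. Since $G/\Gamma$ is compact, $L x$ is compact, so $\Lambda := \Stab_L(x) = L \cap g_0^{-1}\Gamma g_0$ is a cocompact lattice in $L$, and it remains to prove $L = G$. Here I would use the standard fact that a cocompact lattice in the semisimple group $\SL(n,\real)$ contains no nontrivial unipotent element (a nontrivial unipotent in $\Gamma$ would generate a periodic unipotent orbit on $G/\Gamma$, forcing a cusp and contradicting compactness); hence the conjugate lattice $g_0^{-1}\Gamma g_0$, and thus $\Lambda$, has no nontrivial unipotents. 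Since a lattice in $L$ intersects the unipotent radical of $L^{\circ}$ in a lattice of that radical, and every element of a unipotent subgroup of $\SL(n,\real)$ is a unipotent matrix, this forces the unipotent radical of $L^{\circ}$ to be trivial, so $L^{\circ}$ is reductive (note $L^{\circ} \supseteq U$, as $U$ is connected). Finally, a reductive subgroup of $\SL(n,\real)$ that contains the maximal unipotent subgroup $\Un$ must be all of $\SL(n,\real)$: $\Un$ is then a maximal unipotent subgroup of $L^{\circ}$, so $L^{\circ}$ has the same number of positive roots as $\SL(n,\real)$ and (being faithfully represented on $\real^n$) must equal it. Therefore $L \supseteq L^{\circ} = G$, so $U x$ is dense in $G/\Gamma$, and by the reduction above $\Gamma v$ is dense in $\real^n$.

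The step $L = G$ is the one I expect to require real work: the rest is a formal unwinding, but identifying $L$ rests on two structural inputs --- that a compact quotient $G/\Gamma$ of a semisimple group has no unipotents in $\Gamma$, and that $\Un$ lies in no proper reductive subgroup of $\SL(n,\real)$. (One could instead bypass \pref{DaniMax-orbit} and prove minimality of the $U$-action on the compact space $G/\Gamma$ directly, in the spirit of Hedlund and Furstenberg, by contracting horospheres with a suitable diagonal element; but routing through \pref{DaniMax-orbit} packages the genuine dynamics as a black box and leaves only the algebraic identification of $L$.)
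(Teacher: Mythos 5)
Your overall route is the same as the paper's: reduce density of $\Gamma v$ in $\real^n$ to density of a $\Un$-orbit in $G/\Gamma$ via the stabilizer of a fixed vector, then invoke \pref{DaniMax-orbit}. The paper makes exactly this reduction (phrased via $\Gamma g \Un$ applied to a vector fixed by $\Un$; your use of $e_1$ is the correct choice for column vectors) and then simply asserts that horosphericity of $\Un$ together with compactness of $G/\Gamma$ forces $L = G$. So the only place where you go beyond the paper is your proof of $L = G$, and that is where the problems lie.

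Two steps there are not sound as written. First, the lemma ``a lattice in $L$ intersects the unipotent radical of $L^{\circ}$ in a lattice of that radical'' is false without further hypotheses: in $H = \real \times \SO(3)$, the subgroup generated by $(1,r)$ with $r$ a rotation of infinite order is a cocompact lattice meeting the unipotent radical $\real$ only in the identity. The correct statements (Mostow for solvable groups, Raghunathan's Cor.~8.27/8.28 in general) require the semisimple part of $L^{\circ}$ to have no compact factors, which you have not established for $L^{\circ}$ --- nor have you addressed that $L^{\circ}$ need not be algebraic, so ``unipotent radical'' only makes sense after passing to the Zariski closure and then has to be related back to $L$ and $\Lambda$. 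As it stands this step either needs an argument ruling out compact Levi factors of $L^{\circ}$ or a different route. Second, the concluding claim that a connected reductive subgroup of $\SL(n,\real)$ containing $\Un$ must equal $\SL(n,\real)$ is true, but counting positive roots does not prove it: equality of the dimensions of maximal unipotent subgroups leaves the split-torus and compact parts unaccounted for, so no dimension count forces equality. A real argument exists --- e.g.\ complete reducibility of the adjoint action of a reductive algebraic subgroup gives a $\Un$-invariant complement $\mathfrak{m}$ to its Lie algebra in $\mathfrak{sl}(n,\real)$; if $\mathfrak{m} \neq 0$ it contains a nonzero $\Un$-fixed vector, but the centralizer of the Lie algebra of $\Un$ is the highest root space, which already lies in the Lie algebra of $\Un$, a contradiction --- but it has to be supplied. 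Your remaining ingredients are fine: $\Lambda$ is indeed a cocompact lattice in $L$, and cocompact lattices in $\SL(n,\real)$ contain no nontrivial unipotents (a standard Kazhdan--Margulis fact; your parenthetical ``cusp'' justification is a heuristic, not a proof, but the fact is citable). In fairness, the paper itself leaves ``$L_g = G$'' to the reader, so your proposal is incomplete rather than misdirected; but as submitted, the identification $L = G$ --- which you yourself flag as the real work --- rests on one false-as-stated lemma and one unsubstantiated claim.
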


\begin{proof}
\Cref{DaniMax}\pref{DaniMax-orbit} tells us that, for each $g \in G$, there is a (unique) connected subgroup $L_g$ of~$G$, such that $\closure{\Un g \Gamma} = L_g \Gamma$. Since $\Un$ is horospherical (see \cref{HoroRem}) and $G/\Gamma$ is compact, one can show that $L_g = G$. This means $\Un g \Gamma$ is dense in~$G$ (for all $g \in G$).

So $\Gamma g \Un$ is dense in~$G$. 
Since $\Un$ fixes the vector $\vector{e_n} = (0,\ldots,0,1)$, and $G \vector {e_n} = \real^n \smallsetminus \{0\}$, this implies that $\Gamma g \vector{e_n}$ is dense in~$\real^n$. This is the desired conclusion, since $g \vector{e_n}$ is an arbitrary nonzero vector in~$\real^n$.
\end{proof}

\Cref{GreenbergRnDense} provides information about an action of the ``lattice''~$\Gamma$. This particular result is a consequence of the theory of unipotent dynamics, but other theorems of Dani about lattice actions do not come from this theory. As an example of this, we mention Dani's topological analogue of a famous measure-theoretic result of Margulis \cite[Thm.~1.14.2]{Margulis-QuotGrps}:

\begin{thm}[Dani \cite{Dani-EquiImage}] \label{QuotofG/P}
Let 
\noprelistbreak
	\begin{itemize}
	\item $G = \SL(n,\real)$ with $n \ge 3$, 
	\item $\Gamma = \SL(n,\integer)$,
	\item $B$ be the ``Borel'' subgroup of all upper-triangular matrices,
	\item $X$ be a compact, Hausdorff space on which~$\Gamma$ acts by homeomorphisms,
	and
	\item $\phi \colon G/B \to X$ be a continuous, surjective, $\Gamma$-equivariant map.
	\end{itemize}
Then $B$ is contained in a closed subgroup~$P$ of~$G$, such that $X$ is $\Gamma$-equivariantly homeomorphic to $G/P$.
\end{thm}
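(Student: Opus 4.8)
The plan is to reduce the statement to a single assertion about the fibres of $\phi$: that the closed $\Gamma$-invariant equivalence relation
$$ R = \bigl\{\, (a,b) \in (G/B) \times (G/B) \mid \phi(a) = \phi(b) \,\bigr\} $$
is invariant under the diagonal left-translation action of all of $G$, not merely of $\Gamma$. Granting this, let $F_0 = \phi^{-1}\bigl(\phi(eB)\bigr)$ be the $R$-class of the base point and set $P = \{\, g \in G \mid g F_0 = F_0 \,\}$. Since $B \cdot eB = eB \in F_0$, the $G$-invariance of $R$ forces $B \subseteq P$, and a short argument then gives $F_0 = P/B$ and, more generally, that every $R$-class has the form $gP/B$; hence $(G/B)/R = G/P$. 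As $G/B$ is compact and $X$ is Hausdorff, $\phi$ is a closed map, so $X$ carries the quotient topology and the induced bijection $G/P \to X$ is a $\Gamma$-equivariant homeomorphism. Finally $F_0$ is closed (because $X$ is Hausdorff), so $P$ is a closed subgroup of $G$ containing $B$ --- that is, one of the finitely many standard parabolic subgroups --- which is exactly the conclusion.

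Everything thus comes down to proving the $G$-invariance of $R$. Since $\SL(n,\real)$ is generated by its root subgroups $U_\alpha = \{\, u_\alpha(s) \mid s \in \real \,\}$, it suffices to show that $R$ is invariant under each $U_\alpha$; equivalently, that $\phi(a) = \phi(b)$ implies $\phi\bigl(u_\alpha(s)\,a\bigr) = \phi\bigl(u_\alpha(s)\,b\bigr)$ for every $s$. This is the rigidity core of the theorem, and it is here that the hypothesis $n \ge 3$ --- i.e.\ $\real$-rank at least~$2$ --- is used in an essential way. Modeled on Margulis's measure-theoretic factor theorem, one would establish a topological form of the Mautner phenomenon: passing to the larger space $G/T$, where $T$ is the maximal $\real$-split torus and $G/T$ is the open dense $G$-orbit in $(G/B) \times (G/B)$, one uses that the $\Gamma$-action there has dense orbits, together with recurrence of diagonal flows on the finite-volume space $\Gamma \backslash G$ and the continuity of $\phi$, to force invariance of $R$ under certain distinguished unipotent directions. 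The rank-$\ge 2$ structure --- a split Cartan subgroup of dimension $\ge 2$ and the nontrivial commutator relations among root subgroups that it entails --- is what allows one to propagate invariance from those directions to all root subgroups; in $\real$-rank~$1$ (the case $n = 2$) no such propagation is available, and the conclusion of the theorem genuinely fails there. The outcome is that $R$ is $U_\alpha$-invariant for every $\alpha$, hence $G$-invariant.

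The main obstacle is exactly this rigidity step. In Margulis's setting one works with a $\Gamma$-invariant measure and can invoke ergodicity of the $\Gamma$-action on $G/T$ together with the Howe--Moore decay of matrix coefficients; here there is no measure on $X$ to exploit, so the whole mechanism must be carried out with topological dynamics --- minimal sets, Poincar\'e recurrence on $\Gamma \backslash G$, and the continuity and compactness of the fibre map --- standing in for the ergodic-theoretic input. The delicate points are to make the recurrence/contraction argument uniform enough to survive passage to limits, and to check that the commutator propagation really does reach every root direction, so that the stabilizer $P$ of a fibre is forced all the way up to a parabolic subgroup rather than stopping at some intermediate subgroup. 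Once that is in place, the passage from $G$-invariance of $R$ to the identification $X \cong G/P$ is the soft argument sketched above.
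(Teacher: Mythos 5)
The soft endgame you describe is fine: if the fibre relation $R$ of $\phi$ is invariant under all of $G$, then the stabilizer $P$ of the fibre $F_0$ through $eB$ contains $B$, every $R$-class is a translate $gF_0$ with $F_0 = P/B$, and since $G/B$ is compact and $X$ Hausdorff the induced $\Gamma$-equivariant bijection $G/P \to X$ is a homeomorphism, with $P$ closed. But that part is routine; the theorem \emph{is} the assertion that $R$ is $G$-invariant, and that is precisely where your write-up stops being a proof. You reduce to invariance of $R$ under each root subgroup $U_\alpha$ and then say one ``would establish a topological form of the Mautner phenomenon,'' invoking dense $\Gamma$-orbits on $G/T$, recurrence of diagonal flows on $\Gamma \backslash G$, and commutator propagation in rank $\ge 2$ --- but none of this is carried out: there is no argument producing invariance of $R$ under even a single $u_\alpha(s)$, no precise statement of the topological lemma that is to replace ergodicity and decay of matrix coefficients, and no verification of the propagation step that you yourself flag as delicate. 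As it stands, the proposal is a correct reduction plus an acknowledgement that the core rigidity input is missing, which is a genuine gap rather than a proof. (The aside that the conclusion ``genuinely fails'' for $n=2$ is likewise asserted without a counterexample, though nothing in your argument depends on it.)

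For comparison: the survey you were given does not prove this theorem at all --- it states it and cites Dani's original article \cite{Dani-EquiImage}, where the result is established for irreducible lattices in semisimple groups of real rank at least two. Dani's argument does, like yours, analyze the closed $\Gamma$-invariant partition of $G/B$ into fibres, but its substance is a topological-dynamical analysis in which the contraction behaviour of suitable sequences of elements of $\Gamma$ acting on the flag variety, combined with the higher-rank structure, forces this partition to be the orbit partition of a parabolic subgroup; this is the topological stand-in for the ergodic-theoretic machinery (invariant measures, Mautner phenomenon) in Margulis's measurable factor theorem. So your skeleton points in the right direction and is consistent with the shape of Dani's proof, but the heart of the matter --- the actual topological substitute for the ergodic input, yielding invariance of $R$ under the root subgroups --- still has to be supplied.
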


\begin{rem}
To make the statement more elementary, we assume $G =  \SL(n,\real)$ and $\Gamma = \SL(n,\integer)$ in \cref{QuotofG/P}, but Dani actually proved the natural generalization in which $G$ is allowed to be any semisimple Lie group of real rank greater than one, and $\Gamma$ is an irreducible lattice in~$G$.
\end{rem}

\section{Applications in Number Theory} \label{ApplSect}

\subsection{Values of quadratic forms at integer vectors}
Unipotent dynamics (which was discussed in \cref{UnipSect}) became well known through its use in the solution of the ``Oppenheim Conjecture.'' Before explaining this, let us look at a similar problem whose solution is much more elementary.

\begin{eg}
Suppose 
	$$L(x_1,x_2,\ldots,x_n) = \sum_i a_i x_i = a_1 x_1 + \cdots + a_n x_n$$ 
is a homogeneous polynomial of degree~$1$, with real coefficients. (Assume, to avoid degeneracies, that $L$~is not identically zero.) For any $b \in \real$, it is easy to find a solution of the equation $L(\vector x) = b$. 

However, a number theorist may wish to require the coordinates of~$\vector x$ to be integers (i.e., $\vector x \in \integer^n$).  Since $\integer^n$ is countable, most choices of~$b$ will yield an equation that does not have an integral solution, so it is natural to ask for an approximate solution: for every $\epsilon > 0$,
	$$ \text{does there exist $\vector x \in \integer^n$, such that $|L(\vector x) - b| < \epsilon$?} $$
Obviously, to say an approximate solution exists for every $b \in \real$ (and every $\epsilon > 0$) is the same as saying that $L(\integer^n)$ is dense in~$\real$.

It is obvious that if the coefficients of~$L$ are integers, then $L$~takes integer values at any point whose coordinates are integers. This means $L(\integer^n) \subseteq \integer$, so $L(\integer^n)$ is not dense in~$\real$. More generally, if $L$~is a scalar multiple of a polynomial with integer coefficients, then $L(\integer^n)$ is not dense in~$\real$. It is less obvious (but not difficult to prove) that the converse is true: 
	$$ \begin{matrix}
	\text{\it If $L(\vector x)$ is a a homogeneous polynomial of degree~$1$, with real coefficients,} \\
	\text{\it and $L$ is not a scalar multiple of a polynomial with integer coefficients,} \\
	\text{\it then $L(\integer^n)$ is dense in~$\real$}
	. \end{matrix} $$
\end{eg}

Everything in the above discussion is trivial, but the problem becomes extremely difficult if  polynomials of degree~$1$ are replaced by polynomials of degree~$2$. In this setting, a conjecture made by A.\,Oppenheim \cite{Oppenheim-Conj} in 1929 was not proved until almost 60 years later. The statement of the result needs to account for the following counterexamples:

\begin{egs}
Let $Q(x_1,x_2,\ldots,x_n) = \sum_{i,j} a_{i,j} x_i x_j$ be a homogeneous polynomial of degree~$2$, with real coefficients. (In other words, $Q$~is a ``real quadratic form.'')
\noprelistbreak
	\begin{enumerate} \itemsep=\smallskipamount
	
	\item We say $Q$ is \emph{positive-definite} if $Q(\real^n) \subseteq \real^{\ge0}$. (Similarly, $Q$ is \emph{negative-definite} if $Q(\real^n) \subseteq \real^{\le0}$.) Obviously, this implies that $Q(\real^n)$ is not dense in~$\real$, so it is obvious that the smaller set $Q(\integer^n)$ is not dense in~$\real$. For example, if we let $Q(x_1,x_2,\ldots,x_n) = a_1 x_1^2 + \cdots a_n x_n^2$, with each $a_i$~positive, then $Q$~is positive-definite, so $Q(\integer^n)$ is not dense in~$\real$.
	
\item Let $Q(x_1,x_2) = x_1^2 - \alpha^2 x_2^2$. It is not difficult to see that if $\alpha$ is badly approximable, then $0$ is \emph{not} an accumulation point of $Q(\integer^2)$. Obviously, then $Q(\integer^2)$ is not dense in~$\real$. (Recall that, to say $\alpha \in \real$ is \emph{badly approximable} means there exists $\epsilon > 0$, such that $|\alpha - (p/q)| > \epsilon/q^2$ for all $p,q \in \integer$. It is well known that quadratic irrationals, such as $1 + \sqrt{2}$, are always badly approximable.) This means that the ``obvious'' converse can fail when $n = 2$.

\item Given a $2$-variable counterexample $x_1^2 - \alpha^2 x_2^2$, it is easy to construct  counterexamples in any number of variables, such as
	$$Q(x_1,x_2,\ldots,x_n) = (x_1+\cdots+x_{n-1})^2 - \alpha^2 x_n^2 .$$
	Note that this quadratic form has $n$~variables, but a linear change of coordinates can transform it into a form with less than~$n$ variables. This means it is \emph{degenerate}.
	\end{enumerate}
\end{egs}

The following theorem shows that all quadratic counterexamples are of the above types.

\begin{thm}[Margulis \cite{Margulis-Formes}] \label{Margulis-OppenheimThm}
Suppose:
\noprelistbreak
	\begin{itemize} \itemsep=\smallskipamount
	\item $Q(x_1,x_2,\ldots,x_n)$ is a homogeneous polynomial of degree~$2$, with real coefficients,
	\item $Q$ is not a scalar multiple of a polynomial with integer coefficients,
	\item $Q$ is neither positive-definite nor negative-definite, 
	\item $n \ge 3$,
		and
	\item $Q$ is not degenerate.
	\end{itemize}
Then $Q(\integer^n)$ is dense in~$\real$.
\end{thm}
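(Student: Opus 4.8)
\smallskip
\noindent\textbf{Proof idea.}\quad The plan is to reformulate the statement in terms of the flow picture on $G/\Gamma$ with $G = \SL(n,\real)$ and $\Gamma = \SL(n,\integer)$, and then apply Ratner's orbit-closure theorem. First, an elementary (if fiddly) number-theoretic reduction --- passing to the restriction of $Q$ to a carefully chosen rational subspace, as in \cite{DaniMargulis-ElemApproach} --- lets us assume $n = 3$. After a linear change of coordinates and a harmless rescaling, we may write $Q = \lambda\, Q_0 \circ g_0$, where $\lambda \in \real \smallsetminus \{0\}$, $g_0 \in \SL(3,\real)$, and $Q_0(x) = x_1^2 + x_2^2 - x_3^2$ is the standard form of signature $(2,1)$ (here we use that $Q$ is indefinite and non-degenerate). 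Put $\Lambda_0 := g_0 \integer^3 \in G/\Gamma$ and $H := \SO(Q_0)^\circ = \SO(2,1)^\circ$. Since $Q(\integer^3) = \lambda\, Q_0(\Lambda_0)$, it suffices to show that the set $Q_0(\Lambda_0) = \{\, Q_0(v) \mid v \in \Lambda_0 \,\}$ is dense in $\real$. Now $Q_0$ is $H$-invariant (so $Q_0(h\Lambda_0) = Q_0(\Lambda_0)$ for every $h \in H$), continuous, and surjective onto $\real$; hence, since nearby lattices in $G/\Gamma$ have nearby short vectors, density of $Q_0(\Lambda_0)$ will follow as soon as the orbit $H\Lambda_0$ is \emph{dense} in $G/\Gamma$. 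So the whole theorem reduces to proving $\closure{H\Lambda_0} = G/\Gamma$.

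Because $H = \SO(2,1)^\circ$ is semisimple with no compact factors, it is generated by its unipotent one-parameter subgroups; so Ratner's theorem (the orbit-closure statement \pref{RatnerThm-orbit} holds more generally for any closed subgroup generated by unipotents) provides a closed subgroup $L$ with $H \subseteq L \subseteq G$ and $\closure{H\Lambda_0} = L\Lambda_0$, where this orbit is closed and carries a finite $L$-invariant volume. But $H$ is a \emph{maximal} connected subgroup of $\SL(3,\real)$: under the adjoint action of $H$, the Lie algebra $\mathfrak{sl}(3,\real)$ is the direct sum of $\mathfrak{so}(Q_0)$ and the $5$-dimensional \emph{irreducible} $H$-submodule of trace-zero $Q_0$-self-adjoint matrices, so the only subalgebras that contain $\mathfrak{so}(Q_0)$ and are stable under the adjoint action of $H$ are $\mathfrak{so}(Q_0)$ itself and $\mathfrak{sl}(3,\real)$. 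Consequently either $L^\circ = G$ --- in which case $L = G$ and $\closure{H\Lambda_0} = G/\Gamma$, so we are done --- or else $L^\circ = H$, which forces the orbit $H\Lambda_0$ to be itself closed and of finite volume. It remains only to rule out the latter.

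So suppose $H\Lambda_0 = H g_0\Gamma$ is closed and of finite $H$-invariant volume. Then $\Delta := g_0^{-1} H g_0 \cap \Gamma$ is a lattice in $g_0^{-1}Hg_0$; and since $g_0^{-1} H g_0 = \SO(Q)^\circ$ (a conjugate of $H$ --- rescaling a form does not change its orthogonal group) has no compact factors, the Borel Density Theorem shows that $\Delta$ is Zariski dense in $\SO(Q)^\circ$. But $\Delta \subseteq \SL(3,\rational)$, and the Zariski closure of a set of rational matrices is defined over $\rational$; hence $\SO(Q)$, and therefore its Lie algebra $\mathfrak{so}(Q) = \{\, X \mid X^{\mathsf T} S + S X = 0 \,\}$ --- where $S$ is the symmetric matrix of $Q$ --- is defined over $\rational$. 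Finally, $H$ acts irreducibly on $\real^3$ (the representation is equivalent to the adjoint representation of $\SL(2,\real)$), so Schur's Lemma forces the symmetric matrices $S'$ satisfying $X^{\mathsf T} S' + S' X = 0$ for all $X \in \mathfrak{so}(Q)$ to form the single line $\real S$; since this line is defined over $\rational$, it contains a nonzero rational matrix, and so $Q$ is a scalar multiple of a form with integer coefficients. This contradicts the hypotheses, and the argument is complete.

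The crux is the final step --- squeezing rationality of $Q$ out of the closedness of the $H$-orbit; this is also the place where the hypothesis that $Q$ is not proportional to a form with integer coefficients finally gets used, and it is precisely the step that breaks down when $n = 2$ (where a badly approximable $\alpha$ yields a genuinely closed $\SO(1,1)$-orbit). The number-theoretic reduction to $n = 3$ is elementary but should not be dismissed lightly. It is worth recording that this theorem predates \cref{RatnerThm}: Margulis's original proof of the case $n = 3$ had neither measure classification nor orbit closure available, and instead analyzed the $\SO(2,1)$-action on $G/\Gamma$ directly --- examining its minimal subsets and invoking Dani's non-divergence estimate (\cref{DaniMargNondiv}) to show that the orbit of a lattice with irrational $Q$ cannot lie inside a proper closed invariant set; the route sketched above is the streamlined version available once Ratner's theorem is in hand.
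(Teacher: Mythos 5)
Your argument is correct and follows essentially the same route as the paper's sketch: reduce to $n=3$, apply Ratner's orbit-closure theorem to $H \cong \SO(2,1)^\circ$ (which is generated by unipotent one-parameter subgroups), use maximality of $H$ together with irrationality of $Q$ to conclude that the $H$-orbit is dense in $G/\Gamma$, and then approximate values of $Q$ along that dense orbit. You in fact go further than the paper, which merely asserts that $H$ is maximal and that the orbit is not closed: your adjoint-module decomposition of $\mathfrak{sl}(3,\real)$ and your Borel-density/Schur argument correctly supply exactly those omitted justifications.
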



\begin{proof}[Idea of proof] 
We will explain how the result can be obtained as a corollary of Ratner's Theorem \pref{RatnerThm}. (The result was originally proved directly, because Ratner's theorem was not yet available in 1987.)

Assume, for simplicity, that $n = 3$. Given $b \in \real$, we wish to show there exists $\vector m \in \integer^3$, such that $Q(\vector m) \approx b$. Since $Q$ is neither positive-definite nor negative-definite, we have $Q(\real^3) = \real$, so there exists $\vector v \in \real^3$, such that $Q(\vector v) = b$. 

Let $G = \SL(3,\real)$, $\Gamma = \SL(3,\integer)$, and
	$$H = \SO_3(Q) = \{\, h \in \SL(3,\real) \mid \text{$Q(h \vector x) = Q(\vector x)$ for all $\vector x \in \real^3$} \,\} .$$
Note that $H$ is a subgroup of~$G$ that is generated (up to finite index) by unipotent one-parameter subgroups.
Therefore, Ratner's Theorem \pref{RatnerThm} implies that the closure $\closure{H \Gamma}$ of $H\Gamma$ is a very nice submanifold of~$G$. More precisely, there is a closed subgroup~$L$ of~$G$, such that $\closure{H \Gamma} = L \Gamma$, and $H \subseteq L$. However, it can be shown that $H$ is a maximal subgroup of~$G$, and, since $Q(\vector x)$ does not have integer coefficients (up to a scalar multiple), that $\closure{H \Gamma} \neq H \Gamma$ (so $L \neq H$). This implies $L = G$. In other words, $H \Gamma$ is dense in~$G$.

For convenience, let $\vector{e_1} = (1,0,0) \in \real^3$.
Since $G = \SL(3,\real)$ is transitive on the nonzero vectors in $\real^3$, we know there exists $g \in G$, such that $g \vector{e_1} = \vector v$. Then the conclusion of the preceding paragraph implies there exist $h \in H$ and $\gamma \in \Gamma = \SL(3,\integer)$, such that
	\begin{align} \tag{$*$} \label{hgammae1}
	h \gamma \vector {e_1} \approx \vector v 
	. \end{align}
Let $\vector m =  \gamma \vector{e_1} \in \integer^3$. Then 
	\begin{align*}
	Q(\vector m) &= Q (\gamma \vector{e_1}) 
	&& \text{(definition of~$\vector m$)} \\
	&= Q(h \gamma \vector{e_1}) 
	&& \text{(definition of~$H$)} \\
	&\approx Q(\vector v) 
	&& \text{(polynomial~$Q$ is continuous, and (\ref{hgammae1}))} \\
	&= b
	&& \text{(definition of~$\vector v$)} 
	. \qedhere \end{align*}
\end{proof}

\begin{Dani}
Joint work of Dani and Margulis made the following important improvements to this theorem:
\noprelistbreak
	\begin{enumerate} \itemsep=\smallskipamount
	
	\item[\cite{DaniMargulis-PrimIntPts}] \emph{approximation by primitive vectors:} $Q(\mathcal{P})$ is dense in~$\real$, where
	$$ \mathcal{P} = \{\, (m_1,m_2,\ldots,m_n) \in \integer^n \mid \gcd(m_1,m_2,\ldots,m_n) = 1 \,\} .$$
	
	\item[\cite{DaniMargulis-OrbClos}] \emph{simultaneous approximation:} Given two quadratic forms $Q_1$ and~$Q_2$ (satisfying appropriate conditions), the set 
		$ \bigset{ \bigl( Q_1(\vector m) , Q_2(\vector m) \bigr) }{ \vector m \in \mathcal{P} } $
is dense in~$\real^2$. 

	\item[\cite{DaniMargulis-LimitDist}] \emph{quantitative estimates:} For any nonempty open interval $I \subset \real$, \cref{Margulis-OppenheimThm} shows there exists at least one $\vector m \in \integer^n$ with $Q(\vector m) \in I$. In fact, there exist \emph{many} such~$\vector m$ (of bounded norm). Namely, if $\lambda$ is the Lebesgue measure on~$\real^n$, then
		$$ \liminf_{C \to \infty} \frac{
		 \# \bigset{ \vector m \in \integer^n }{ 
		 	\begin{matrix} Q(\vector m) \in I , \\[\smallskipamount] \| \vector m \| < C \end{matrix} }
		 }{
		\lambda \left( \bigset{ \vector v \in \real^n }{
			 \begin{matrix} Q(\vector v) \in I , \\[\smallskipamount] \| \vector v \| < C \end{matrix} } \right) 
		} \ge 1 .$$
	Furthermore, the estimate is uniform when $Q$ varies over any compact set of quadratic forms that all satisfy the hypotheses of \cref{Margulis-OppenheimThm}.

	\end{enumerate}
The paper \cite{DaniMargulis-LimitDist} has been especially influential, because it introduced \emph{Linearization} (which was discussed in \cref{LinearizationSect}). 
\end{Dani}

Dani has continued his contributions to this field of research in recent years. For example, he \cite{Dani-SimApproxQuadLin} proved a simultaneous approximation theorem for certain pairs consisting of a linear form and a quadratic form.

\section{Dynamics of general homogeneous flows} \label{OpenProbSect}

This section presents a few fundamental questions that Dani worked on, but remain open.

\subsection{Kolmogorov automorphisms}

\begin{defn}
If $p \colon X \to \real^+$ is a probability distribution on a finite set $X = \{x_1,\ldots,x_n\}$, then there is a natural product measure $p^\integer$ on the infinite product 
	$X^\integer = \{\, f \colon \integer \to X \,\} $,
and the associated \emph{Bernoulli shift} is the measurable map
	$ B_{X,p} \colon X^\integer \to X^\integer$,
defined by
	$ B_{X,p}(f)(k) = f(k-1)$.
\end{defn}

It is believed that almost all translations on homogeneous spaces are isomorphic to Bernoulli shifts:

\begin{conj} 
Suppose 
\noprelistbreak
	\begin{itemize}
	\item $G = \SL(n,\real)$ \textup(or, more generally, let $G$ be a connected, semisimple, linear Lie group with no compact factors\textup),
	\item $\Gamma = \SL(n,\integer)$ \textup(or, more generally, let $\Gamma$ be an irreducible lattice in~$G$\textup),
	\item $g \in G$, 
	and
	\item $T_g \colon G/\Gamma \to G/\Gamma$ be defined by $T_g(x \Gamma) = gx\Gamma$.
	\end{itemize}
If there is an eigenvalue~$\lambda$ of~$g$, such that $|\lambda| \neq 1$ \textup(in other words, if the entropy of~$T_g$ is nonzero\textup), then $T_g$ is measurably isomorphic to a Bernoulli shift.
\end{conj}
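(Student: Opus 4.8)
The plan is to follow the route that has worked for geodesic flows and for ergodic automorphisms of nilmanifolds: first establish that $T_g$ is a Kolmogorov automorphism, and then upgrade the $K$-property to the Bernoulli property via Ornstein's isomorphism theory, the essential geometric input being the uniform contraction and expansion of $T_g$ along the stable and unstable horospherical foliations of $G/\Gamma$.

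First I would reduce to the ergodic $K$ case. Ergodicity of $T_g$ is automatic: since $g$ has an eigenvalue off the unit circle, the cyclic subgroup $\langle g\rangle$ is unbounded, so Moore's ergodicity theorem applies. Let $W^-$ and $W^+$ be, respectively, the contracting and expanding horospherical subgroups of~$g$ (each is of the form $U_h$ as in \cref{HoroRem}), and let $W^0$ be the ``neutral'' subgroup corresponding to the generalized eigenvalues of modulus~$1$. These three subgroups give a local product decomposition of a neighbourhood of~$e$ in~$G$, hence, via $G \to G/\Gamma$, a local product structure for Haar measure; and $T_g$ contracts $W^-$ and expands $W^+$ uniformly exponentially while acting only ``polynomially'' (isometric-plus-unipotent) on $W^0$. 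The action of $W^-$ on $G/\Gamma$ is ergodic --- a consequence of \cref{DaniMax} (or of the Mautner phenomenon) --- and a Rokhlin--Sinai-type argument then forces the Pinsker $\sigma$-algebra of $T_g$ into the $\sigma$-algebra of $W^-$-invariant sets, which is trivial; so $T_g$ is a $K$-automorphism. (This step is essentially Dani's theorem on Kolmogorov automorphisms of homogeneous spaces.)

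To pass from $K$ to Bernoulli, I would fix a finite generating partition $\mathcal P$ for $T_g$ whose atoms have piecewise-smooth boundaries transverse to the three foliations and verify that the process $(\mathcal P, T_g)$ is \emph{very weak Bernoulli}; by Ornstein's theorem this yields the Bernoulli property. The mechanism is a coupling argument in local product coordinates: if $x$ and $y=w^- w^0 w^+ x$ are two generic nearby points, then forward iteration drives the $w^-$-component to~$e$ exponentially fast, so the $\mathcal P$-names of $T_g^n x$ and $T_g^n(w^- x)$ agree outside a sparse set of times, and symmetrically backward iteration disposes of~$w^+$; one is left to show that the slowly varying neutral perturbation $w^0$ still permits a $\bar d$-good match of names. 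Nondivergence estimates of Dani--Margulis type (\cref{DaniMargNondiv}), applied uniformly over compact sets, are needed here to guarantee that typical orbits do not spend an appreciable fraction of time deep in the cusp, so that the local product structure remains usable along a density-one set of times.

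The genuinely hard part --- and the reason the conjecture remains open in general --- is precisely the interaction of the neutral direction $W^0$ with the noncompactness of $G/\Gamma$. When $g$ has eigenvalues that are roots of unity lying in nontrivial Jordan blocks, $T_g$ restricted to the $W^0$-direction behaves like an ergodic automorphism of a nilmanifold, which is known to be Bernoulli (Katznelson; Lind; Miles--Thomas), but this is not literally a direct factor of $T_g$, so those results cannot simply be invoked; controlling, uniformly near infinity, how the isometric-polynomial neutral motion mixes with the hyperbolic directions in the $\bar d$-metric is the crux. I would expect a complete proof to require either a refinement of the very-weak-Bernoulli estimates that is uniform over the cusp (leaning on nondivergence), or a structural argument that first splits off a nilmanifold-automorphism factor and an Anosov-type Bernoulli factor and then proves the remaining extension is relatively Bernoulli --- and it is exactly this gluing step that, to my knowledge, has not been carried out in full generality.
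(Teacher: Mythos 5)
The statement you are addressing is not a theorem of the paper but an open conjecture: the paper offers no proof, and presents it only as being motivated by two results of Dani \cite{Dani-Kolmogorov}, namely that the conclusion holds when $g$ is diagonalizable over~$\complex$, and that in general $T_g$ (with nonzero entropy) is a Kolmogorov automorphism. Your proposal, read as a proof, therefore has a genuine gap --- and you have in fact located it yourself. The reduction to the $K$-property (ergodicity via Moore, triviality of the Pinsker algebra via ergodicity of the contracting horospherical subgroup) reproduces what is already known, and the Ornstein/very-weak-Bernoulli coupling along the stable and unstable horospherical directions is the standard mechanism in the diagonalizable (or compact-quotient Anosov-like) setting. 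But the step where you must produce a $\bar d$-good matching of names in the presence of the neutral subgroup $W^0$ --- that is, when $g$ has Jordan blocks for eigenvalues of modulus one, so the neutral motion is polynomial rather than isometric, and this must be controlled uniformly out to the cusp of the noncompact space $G/\Gamma$ --- is asserted as the thing one ``would expect a complete proof to require,'' not carried out. That step is exactly the content of the conjecture beyond Dani's theorems, so the proposal does not close the gap between ``Kolmogorov'' and ``Bernoulli'' for non-diagonalizable~$g$; invoking the Bernoullicity of ergodic nilmanifold or compact-group automorphisms does not help directly, as you note, because the neutral dynamics is not a genuine factor of~$T_g$.

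To be clear about the comparison with the paper: the paper's own treatment consists only of stating the conjecture and citing Dani's two results as evidence (non-Bernoulli $K$-automorphisms being rare), so your write-up is best viewed as an accurate survey of the known strategy and of why it stalls, rather than as a proof. If you wish to contribute something beyond the literature, the place to work is precisely the uniform-over-the-cusp very-weak-Bernoulli estimate for the mixed neutral-plus-hyperbolic local product structure (where nondivergence results in the spirit of \cref{DaniMargNondiv} would indeed be the natural tool), or a relative Ornstein argument showing the extension over a suitable zero-entropy or nil-factor is relatively Bernoulli; neither is supplied here.
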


\begin{Dani} Two results of Dani \cite{Dani-Kolmogorov} are the inspiration for this conjecture:
\noprelistbreak
	\begin{enumerate}
	\item The conjecture is true if the matrix~$g$ is diagonalizable over~$\complex$.
	\item \label{DaniKThm}
	In the general case, $T_g$ has no zero-entropy quotients (if $\exists \lambda, |\lambda| \neq 1$).
	\end{enumerate}
A transformation with no zero-entropy quotients is called a ``\emph{Kolmogorov automorphism}.'' Examples of non-Bernoulli Kolmogorov automorphisms are rare, so \pref{DaniKThm} is good evidence that the conjecture is true for all~$g$, not just those that are diagonalizable. 
\end{Dani}

\subsection{Anosov diffeomorphisms}

\begin{defn}
A diffeomorphism~$f$ of a compact, connected manifold~$M$ is \emph{Anosov} if, at every point $x \in M$, the tangent space $T_x M$ has a splitting $T_x M = \bundle^+ \oplus \bundle^-$, such that	
\noprelistbreak
	\begin{itemize}
	\item for $v \in \bundle^+$, $D(f^{k})(v) \to 0$ exponentially fast as $k \to -\infty$,
	and
	\item for $v \in \bundle^-$, $D(f^{k})(v) \to 0$ exponentially fast as $k \to +\infty$.
	\end{itemize}
\end{defn}

\begin{conj}[from the 1960's] \label{AnosovConj}
If there is an Anosov diffeomorphism~$f$ on~$M$, then some finite cover of~$M$ is a nilmanifold. \textup(This means the cover is a homogeneous space $G/\Gamma$, where $G$ is a nilpotent Lie group, and\/ $\Gamma$~is a discrete subgroup of~$G$.\textup)
Furthermore, lifting $f$ to the finite cover yields an affine map on the nilmanifold.
\end{conj}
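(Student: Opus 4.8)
Since \cref{AnosovConj} is a well-known open problem, what follows is an attack plan together with an honest account of where it stalls. The plan is first to reduce the conjecture to a purely group-theoretic statement about $\pi_1(M)$. The ``Furthermore'' clause and the upgrade from ``nilmanifold'' to ``affine map'' are not the difficulty: by work of Franks and Manning from the early 1970s, if $M$ is \emph{already known} to be finitely covered by a nilmanifold $G/\Gamma$, then every Anosov diffeomorphism of~$M$ is topologically conjugate to an affine automorphism and its lift to $G/\Gamma$ is genuinely affine; and Manning proved that a closed manifold carrying an Anosov diffeomorphism whose fundamental group is \emph{virtually nilpotent} must be such an infranilmanifold. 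Combining these facts with Gromov's theorem that a finitely generated group of polynomial growth is virtually nilpotent, I would reduce \cref{AnosovConj} to the single assertion: \emph{if $M$ admits an Anosov diffeomorphism, then $\pi_1(M)$ has polynomial growth.}

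To get at the growth of $\pi_1(M)$ I would pass to the universal cover. Fix a Riemannian metric on~$M$, let $\widetilde M$ be its universal cover with deck group $\Lambda \cong \pi_1(M)$ acting cocompactly by isometries, and lift $f$ to $\widetilde f \colon \widetilde M \to \widetilde M$; then $\widetilde f$ normalizes~$\Lambda$ (inducing an automorphism $f_* \in \Aut \Lambda$), and, after replacing $f$ by a power, $\widetilde f$ uniformly expands the lifted subbundle $\bundle^+$ and uniformly contracts $\bundle^-$ by a factor $\ge \mu > 1$. Since $M$ is compact, the growth of $\Lambda$ equals the volume growth of $\widetilde M$, so it suffices to bound $\operatorname{vol}\bigl(B(\widetilde x_0, R)\bigr)$ by a polynomial in~$R$. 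The mechanism I would try to exploit is that $\widetilde f$ covers a diffeomorphism of the \emph{compact} manifold~$M$, so the Jacobians of $\widetilde f$ and of $\widetilde f^{-1}$ are globally bounded, say by~$K$; hence each application of $\widetilde f^{\pm 1}$ multiplies volume by at most~$K$. If one could also show that $B(\widetilde x_0, R)$ is covered by the image of a fixed unit ball under a word of length $O(\log R)$ in $\widetilde f^{\pm1}$, then $\operatorname{vol}\bigl(B(\widetilde x_0,R)\bigr) \le K^{O(\log R)} = R^{O(1)}$, giving polynomial growth, and the reduction above would finish the proof.

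The hard part --- and the reason the conjecture is still open --- is precisely that ``$O(\log R)$ steps'' claim. Applying $\widetilde f$ to a unit ball does \emph{not} yield a ball of radius $\approx \mu$: it stretches the ball into a thin ``pancake,'' long in the unstable directions but still of unit thickness in the stable ones. Filling a genuine round ball of large radius seems to require interleaving $\widetilde f$ (to grow the unstable directions) with $\widetilde f^{-1}$ (to grow the stable directions), and estimating the distortion of shapes under this alternation --- with no a priori regularity of the stable and unstable foliations --- is what no one has managed in general. This is why the expanding-map analogue is a theorem of Gromov (with no stable direction, the image of a ball is again a ball of larger radius and the $O(\log R)$ bound is immediate) and why the codimension-one case is a theorem of Franks and Newhouse (a one-dimensional thin direction can be controlled by separation and planarity arguments in place of foliation regularity). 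To push the general case through one would need enough quasi-isometric control of the unstable foliation of $\widetilde M$ --- for instance, that unstable leaves are uniformly quasi-isometrically embedded --- to make the pancake-stacking estimate rigorous; I do not know how to do that, which is exactly the current state of the art.
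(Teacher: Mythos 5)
This statement is \cref{AnosovConj}, an open conjecture that the paper explicitly lists among the ``fundamental questions that Dani worked on, but remain open,'' so there is no proof in the paper to compare against, and you correctly decline to offer one. Your survey of the standard reduction (Franks--Manning rigidity on infranilmanifolds plus Gromov's polynomial-growth theorem, reducing everything to polynomial growth of $\pi_1(M)$) and of the precise point where the argument stalls --- the lack of control on how iterates of the lifted map fill metric balls in the universal cover, which is exactly why only the expanding-map and codimension-one cases are known --- is an accurate description of the current state of the art.
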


\begin{Dani}
\ 
\noprelistbreak
	\begin{enumerate}
	\item[\cite{Dani-NilAnosov}] Dani constructed many nilmanifolds that have Anosov diffeomorphisms. Much more recently, joint work with M.\,Mainkar \cite{DaniMainkar-Anosov} constructed examples of every sufficiently large dimension.
	\item[\cite{Dani-AffineAutsFP}] Dani proved \cref{AnosovConj} under the assumption that $M$ is  a double-coset space $K \backslash G / H$ (and certain additional technical conditions are satisfied).
	\end{enumerate}
\end{Dani}

\subsection{Divergent trajectories}
We know, from \cref{UnipNotDiverge}, that if $u^t$ is unipotent, then no $u^t$-orbit diverges to~$\infty$. On the other hand, orbits of a diagonal subgroup can diverge to~$\infty$.

\begin{eg}
Let
	$$G = \SL(2,\real),
	\quad
	a^t = \begin{bmatrix} e^{-t} & 0 \\ 0 & e^t \end{bmatrix},
	\quad 
	\Gamma = \SL(2,\integer),
	\text{\quad and\quad}
	\vector{e_1} = \begin{bmatrix} 1 \\ 0 \end{bmatrix} .$$
Then $a^t \, \vector{e_1} \to \vector 0$ as $t \to \infty$, so $a^t \, \Gamma \to \infty$ in $G/\Gamma$.
\end{eg}

The divergent orbits of the diagonal matrices in $\SL(2,\real) / \SL(2,\integer)$ are well known, and quite easy to describe. Dani vastly generalized this, by proving that all divergent orbits are obvious in a much wider setting. Here is a special case of his result:

\begin{thm}[Dani \cite{Dani-Divergent}] \label{DaniDivRank1}
Let 
\noprelistbreak
	\begin{itemize}
	\item $G = \SO(1,n)$ \textup(or, more generally, let $G$ be a connected, almost simple algebraic\/ $\rational$-subgroup of\/ $\SL_{n+1}(\real)$, with $\Qrank G = 1$\textup),
	\item $\Gamma = G \cap \SL_{n+1}(\integer)$,
	\item $\{a^t\}$ be a one-parameter subgroup of~$G$ that is diagonalizable over~$\real$,
	and
	\item $g \in G$.
	\end{itemize}
Then $a^t g \Gamma$ diverges to~$\infty$ in $G/\Gamma$ if and only if there exist
\noprelistbreak
	\begin{itemize}
	\item a continuous homomorphism $\rho \colon G \to \SL(\ell,\real)$, for some~$\ell$, such that $\rho(\Gamma) \subseteq \SL(\ell,\integer)$,
	and
	\item a nonzero vector $\vector v \in \integer^\ell$,
	\end{itemize}
such that $\rho(a^t g) \, \vector v \to \vector 0$ as $t \to \infty$.
\end{thm}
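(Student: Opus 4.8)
The plan is to prove the two implications separately: the ``if'' direction is soft, while the ``only if'' direction carries all the weight and is where the hypothesis $\Qrank G = 1$ is used essentially. For ``if'', suppose $\rho$ and $\vector v$ are as in the statement, with $\rho(a^t g)\vector v \to \vector 0$, and argue by contradiction. If $a^t g \Gamma$ did not diverge, there would be $t_j \to \infty$ and $\gamma_j \in \Gamma$ with $a^{t_j} g \gamma_j \to h$ for some $h \in G$. Since $\rho$ is continuous and $\rho(\Gamma) \subseteq \SL(\ell,\integer)$, the unimodular lattices $\rho(a^{t_j}g)\integer^\ell = \rho(a^{t_j}g\gamma_j)\integer^\ell$ would converge, in the space of unimodular lattices, to $\rho(h)\integer^\ell$; as the length of a shortest nonzero vector is continuous there (Mahler's criterion), the systoles of these lattices would converge to the systole of $\rho(h)\integer^\ell$, a positive number. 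But $\vector v \in \integer^\ell \smallsetminus \{\vector 0\}$, so each lattice $\rho(a^{t_j}g)\integer^\ell$ contains the nonzero vector $\rho(a^{t_j}g)\vector v$, whose length tends to $0$; hence the systoles tend to $0$ --- a contradiction.

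For ``only if'', assume $a^t g \Gamma$ diverges; the geometric picture to keep in mind is that of horoballs in a hyperbolic orbifold (the case $G = \SO(1,n)$). By reduction theory, $G/\Gamma$ minus a large compact set is a finite disjoint union of cusp neighborhoods, one for each $\Gamma$-conjugacy class of minimal parabolic $\rational$-subgroup; in $\rational$-rank one these neighborhoods can be taken genuinely disjoint, so --- $[T,\infty)$ being connected --- the trajectory is eventually inside a single cusp neighborhood $\mathcal C$, attached to a minimal parabolic $\rational$-subgroup $P$ whose $\rational$-split torus $A_P$ is one-dimensional. Next, choose (by Chevalley's theorem over $\rational$) a $\rational$-rational representation $\rho \colon G \to \SL(\ell,\real)$, with $\rho(\Gamma) \subseteq \SL(\ell,\integer)$ in suitable coordinates, together with a primitive integral vector $v_0$ whose $\rho$-image shrinks exponentially as one runs to infinity in $A_P^+$, the direction into the cusp $\mathcal C$ (equivalently, $v_0$ spans the appropriate $\rational$-rational line in the flag of $P$). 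Then a point $h\Gamma$ is deep in $\mathcal C$ precisely when $\min\{\,\|\rho(h)w\| : w \in \rho(\Gamma) v_0 \,\}$ is small, so the divergence of $a^t g \Gamma$ into $\mathcal C$ amounts to $\min\{\,\|\rho(a^t g) w\| : w \in \rho(\Gamma) v_0 \,\} \to 0$. The crucial point --- and the one place that one-dimensionality of $A_P$ is used --- is that once the point lies deep in the cusp this minimum is attained by a unique $w$ up to sign, varying continuously with the point; hence on the connected ray $[T,\infty)$ the minimizing vector is a single $\vector v = \rho(\gamma_0) v_0 \in \integer^\ell \smallsetminus \{\vector 0\}$, and $\rho(a^t g)\vector v \to \vector 0$, as required.

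The main obstacle is exactly the reduction-theoretic input just used: that in $\rational$-rank one a divergent trajectory is confined to a single cusp, and that, deep in such a cusp, the shortest vector of the relevant lattice is unique up to sign and stabilizes --- as the trajectory runs off to infinity --- to a fixed $\Gamma$-translate of $v_0$. Both ingredients, the disjointness of the cusp neighborhoods and the one-dimensionality of $A_P$ (which prevents the shortest-vector direction from rotating as the point escapes), fail once $\Qrank G \ge 2$, and in that generality there exist divergent trajectories not described by any single $\rho$ and $\vector v$; so this is precisely where the hypothesis $\Qrank G = 1$ is indispensable. The role of $\real$-diagonalizability of $\{a^t\}$ is auxiliary --- it makes the $A_P$-weight analysis available and excludes the purely unipotent flows, which by \cref{UnipNotDiverge} never diverge.
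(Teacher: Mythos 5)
This survey states \cref{DaniDivRank1} without proof (it is simply cited to Dani \cite{Dani-Divergent}), so there is no in-paper argument to compare against; judged on its own, your proposal is correct and follows essentially Dani's route: the soft Mahler-type compactness argument for the ``if'' direction, and, for ``only if'', rank-one reduction theory --- finitely many pairwise disjoint cusp neighborhoods, a Chevalley representation with a rational highest-weight vector so that depth in the cusp is measured by $\min_{w \in \rho(\Gamma)v_0} \|\rho(h)w\|$, and uniqueness (up to sign) of the small vector deep in a cusp --- combined with connectedness of the ray $[T,\infty)$ to freeze the minimizing integral vector. The three reduction-theoretic inputs you isolate are precisely the nontrivial content of Dani's paper, and you identify and use them correctly, so the outline stands as a faithful sketch of the original proof rather than a new argument.
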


Surprisingly, he was also able to exhibit many cases in which there are divergent orbits that do not come from the construction in \cref{DaniDivRank1}:

\begin{thm}[Dani \cite{Dani-Divergent}] \label{DaniDivRank>1}
Let 
\noprelistbreak
	\begin{itemize}
	\item $G = \SL(n,\real)$, with $n \ge 3$ \textup(or, more generally, let $G$ be the\/ $\real$-points of a connected, almost simple algebraic\/ $\rational$-group with $\Qrank G = \Rrank G \ge 2$\textup),
	\item $\Gamma = \SL(n,\integer)$ \textup(or, in general, let $G$ be the\/ $\integer$-points of~$G$\textup),
	and
	\item $\{a^t\}$ be a\/ \textup(nontrivial\/\textup) one-parameter subgroup of~$G$ that is diagonalizable over\/~$\real$.
	\end{itemize}
Then there are $a^t$-orbits in $G/\Gamma$ that diverge to~$\infty$, but do not correspond to a continuous homomorphism $\rho \colon G \to \SL(\ell,\real)$, as in \cref{DaniDivRank1}.
\end{thm}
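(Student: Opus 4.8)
The plan is to prove \cref{DaniDivRank>1} by an explicit construction, since its content is precisely the \emph{existence} of ``exotic'' divergent orbits. The key device is a \emph{relay}: rather than producing a single integer vector that shrinks under $\{a^t g\}$ in some $\rational$-representation (which would be the degenerate, rank-one–style behaviour), we produce an infinite sequence $v_1, v_2, v_3, \dots$ of distinct primitive integer vectors so that $a^t g v_k$ is short on a time interval $I_k$, with the $I_k$ overlapping and covering a half-line $[T_0, \infty)$; divergence then follows from Mahler's compactness criterion. The starting point $g$ will be chosen by a Baire-category argument so that it is at once ``Liouville enough'' for the relay to close up and ``generic enough'' that no proper rational subspace (equivalently, no $\rational$-representation) can be held responsible for the divergence.

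Here are the steps. \textbf{(1)~Normalize.} It suffices to treat $G = \SL(n,\real)$, $\Gamma = \SL(n,\integer)$ (for general $G$ one replaces Mahler's criterion by reduction theory, with no change in the scheme). After conjugating we may take $a^t = \operatorname{diag}(e^{\lambda_1 t}, \dots, e^{\lambda_n t})$ with $\sum_i \lambda_i = 0$ and, reordering, $\lambda_1 \ge \dots \ge \lambda_n$; since $\{a^t\}$ is nontrivial, $\lambda_1 > 0 > \lambda_n$. Let $W^- = \operatorname{span}\{e_i : \lambda_i < 0\}$ be the contracting subspace (a proper nonzero subspace of $\real^n$). \textbf{(2)~What divergence needs.} A short computation shows that for any $\epsilon > 0$ and any $v \in \integer^n \setminus \{0\}$ one has $\|a^t g v\| < \epsilon$ for all $t$ in an interval $\approx [\, c_1 \log \|v\|,\ c_2 \log(\epsilon/\operatorname{dist}(g v, W^-)) \,]$, where $c_1, c_2 > 0$ depend only on the $\lambda_i$. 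Hence if $\|v_k\| \to \infty$ and $\delta_k := \operatorname{dist}(g v_k, W^-)$ decays fast enough that $c_2 \log(\epsilon_k/\delta_k) \ge c_1 \log \|v_{k+1}\|$ for some sequence $\epsilon_k \to 0$, then the $I_k$ overlap and exhaust $[T_0,\infty)$, so the shortest nonzero vector of the lattice $a^t g \integer^n$ tends to $0$ as $t \to +\infty$, i.e.\ $a^t g \Gamma \to \infty$. \textbf{(3)~Build the relay.} The set of $g \in G$ for which $g^{-1} W^-$ is approximable by primitive integer vectors to \emph{every} polynomial order is a dense $G_\delta$ in $G$: given finitely many already-chosen $v_1, \dots, v_{k-1}$, one uses Dirichlet's theorem to locate a large $v_k$ with $g v_k$ close to $W^-$ and then perturbs $g$ by an amount $\lesssim \|v_k\|^{-1}$, small enough to leave the earlier data essentially undisturbed, so that $\operatorname{dist}(g v_k, W^-) < \|v_k\|^{-m_k}$ with $m_k \nearrow \infty$. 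Letting $\|v_{k+1}\|$ grow, say, like $\|v_k\|^2$, the inequality of Step~(2) is met; thus every $g$ in this $G_\delta$ yields a divergent orbit.

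The remaining point, \textbf{(4)~non-degeneracy}, is where representation theory enters — exactly as in the proof of \cref{DaniDivRank1}. One shows that the orbit $a^t g \Gamma$ arises from a homomorphism $\rho$ and a vector $v \in \integer^\ell$ as in \cref{DaniDivRank1} if and only if there are some $d \in \{1, \dots, n-1\}$ and a proper nonzero rational subspace $V \subseteq \real^n$ (i.e.\ spanned by integer vectors) with $\operatorname{covol}\bigl(a^t g (V \cap \integer^n)\bigr) \to 0$ as $t \to +\infty$; the latter says precisely that the decomposable vector $\wedge^d(g V)$ lies in the span of the strictly negative weight spaces of $a^t$ on $\wedge^d \real^n$. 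For each fixed $(d, V)$ this confines $g V$ to a proper Zariski-closed subset of the Grassmannian $\operatorname{Gr}(d, n)$ — proper because $\sum_i \lambda_i = 0$ forces the sum of the $d$ largest $\lambda_i$ to be $\ge 0$, so $\wedge^d\!\bigl(\operatorname{span}\{e_1, \dots, e_d\}\bigr)$ is not in that negative span — and hence confines $g$ to a proper subvariety of $G$. Ranging over the countably many pairs $(d, V)$, the union of these ``bad'' sets is meager. Since the $G_\delta$ of Step~(3) is comeager, we may pick $g$ in its complement; such a $g$ gives a divergent orbit that corresponds to no homomorphism $\rho$, which is what \cref{DaniDivRank>1} asserts.

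I expect the crux, and the genuine obstacle, to be the tension between Steps~(3) and~(4): the relay forces $g^{-1} W^-$ (hence, a priori, the $g^{-1}$-images of many subspaces) to be extraordinarily well approximated by integer vectors — and that is exactly the kind of phenomenon that ordinarily \emph{creates} a degenerate, homomorphism-type divergent orbit. Seeing that one can make $g^{-1} W^-$ super-approximable while still dodging all the countably many genuinely rational subspaces is the whole reason higher rank behaves differently from rank one, and pinning down the correct ``bad set'' (and verifying it is meager, via the weight combinatorics above) is the technical heart of the proof. A secondary nuisance is the quantitative bookkeeping in Steps~(2)–(3) needed to make the relay intervals truly overlap, rather than merely establishing divergence along a sequence of times; this is routine but must be carried out with some care.
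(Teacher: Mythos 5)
Your strategy --- a Diophantine ``relay'' of primitive integer vectors that take turns being short under $a^t g$, produced by an inductive perturbation of~$g$, plus a representation-theoretic argument that no rational witness can account for the divergence --- is the right kind of argument, and it is close in spirit to Dani's actual one (the survey states the theorem without proof; in \cite{Dani-Divergent} Dani passes through the correspondence between divergent trajectories and Khintchine-type singular systems of linear forms and adapts Khintchine's inductive construction, with the non-degeneracy built into the induction). But there is a genuine gap exactly where you locate the crux, in the Baire-category packaging of Steps (3)--(4). The set of $g$ admitting, for every $m$, infinitely many primitive $v$ with $\mathrm{dist}(gv,W^-)<\|v\|^{-m}$ is indeed a dense $G_\delta$, but membership in it does \emph{not} give divergence: it yields short vectors on infinitely many long time intervals with completely uncontrolled gaps between them, hence only unbounded excursions ($\limsup_t$ of the distance is infinite), not $a^t g\Gamma\to\infty$. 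The set you actually need --- $g$ admitting a \emph{chained} sequence $v_k$ with $\|v_{k+1}\|$ controlled by $\|v_k\|$ and $m_k$ so that the intervals $I_k$ cover a half-line --- is contained in the set of divergent points, and that set is meager, not comeager: fixing a compact $K$ with nonempty interior, Birkhoff's theorem for the (ergodic) flow shows that almost every point enters $K^\circ$ at arbitrarily large times, so $\bigcap_N\bigcup_{t>N}\{x: a^t x\in K^\circ\}$ is a dense $G_\delta$ consisting of non-divergent points. Consequently the concluding move ``comeager relay set meets the complement of the meager bad set'' is unavailable; no category argument can hand you a divergent point for free.

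The repair is to drop genericity for the relay and keep your nested-ball induction, folding the non-degeneracy into it; your Step (4) observation is precisely what makes this possible. For each pair $(\rho,v)$ with $v$ rational (you need not reduce to exterior powers and rational subspaces --- your ``iff'' requires a highest-weight argument and can simply be bypassed by enumerating the pairs $(\rho,v)$ directly), the condition $\rho(a^t g)v\to 0$ forces $\rho(g)v$ to lie in the sum of the strictly contracted weight spaces, which is a proper (by the Weyl-invariance of weights, or your observation that the sum of the $d$ largest $\lambda_i$ is nonnegative) Zariski-closed, hence closed and nowhere dense, condition on~$g$. So at the $k$-th stage of the induction you can shrink the current ball to avoid the $k$-th such bad set, while still performing the $\lesssim\|v_k\|^{-1}$ perturbation that achieves $0<\mathrm{dist}(gv_k,W^-)<\|v_k\|^{-m_k}$ and preserving the margins of the earlier conditions (note you must keep these distances nonzero: $gv_k\in W^-$ exactly would itself be a degenerate witness in the standard representation). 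The limit $g$ then has a divergent orbit by your Step (2) computation (which is fine, with the caveat that zero weights of $a^t$ give non-decaying components, so they too must be absorbed into the smallness of $\mathrm{dist}(gv_k,W^-)$), and it is non-degenerate by construction rather than by genericity; this is essentially how Dani's construction secures both properties simultaneously.
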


It remains an open problem to determine the set of divergent orbits in $G/\Gamma$ in cases where \cref{DaniDivRank1} does not apply.

\bibliographystyle{amsplain}

\end{document}